 \newtheorem{thm}{Theorem}
 \newtheorem{cor}{Corollary}
 \newtheorem{prop}{Proposition}
 \newtheorem{lem}{Lemma}
 {\theoremstyle{definition}
 \newtheorem{conj*}{Conjecture}}
 {\theoremstyle{definition}
 }
 {\theoremstyle{definition}
 \newtheorem{rem}{Remark}}
 {\theoremstyle{definition}
  \newtheorem{defn}{Definition}}
 {\theoremstyle{definition}
 \newtheorem{exam}{Example}}
\begin{document}  
\newcommand{\h}{ {\mathbb{H}}^{2} }
 \newcommand{\R}{ \mathbb{R}}
  \newcommand{\bk}{\mathbf{k}}
 \newcommand{\C}{ \mathcal{C}}
 \newcommand{\lo}{ <_{\textsf{left}}}
 \newcommand{\ro}{ <_{\textsf{right}}}
  \newcommand{\Code}{ \textsf{Code}}


\title{On finite Thurston-type orderings of braid groups}
\author{Tetsuya Ito}
\address{Graduate School of Mathematical Science, University of Tokyo, Japan}
\email{tetitoh@ms.u-tokyo.ac.jp}
\subjclass[2000]{Primary~20F36, Secondary~20F60}
\keywords{Thurston-type ordering, Braid groups, ordinals, Garside monoid, Artin-Tits group}

\begin{abstract} 
 We prove that for any finite Thurston-type ordering $<_{T}$ on the braid group $B_{n}$, the restriction to the positive braid monoid $(B_{n}^{+},<_{T})$ is a well-ordered set of order type $\omega^{\omega^{n-2}}$. The proof uses a combinatorial description of the ordering $<_{T}$. Our combinatorial description is based on a new normal form for positive braids which we call the $\C$-normal form. It can be seen as a generalization of Burckel's normal form and Dehornoy's $\Phi$-normal form (alternating normal form). 
\end{abstract}
 \maketitle

\section{Introduction}

 The braid group $B_{n}$ is a group defined by the presentation
\[
B_{n} = 
\left\langle
\sigma_{1},\sigma_{2},\ldots ,\sigma_{n-1}
\left|
\begin{array}{ll}
\sigma_{i}\sigma_{j}=\sigma_{j}\sigma_{i} & |i-j|\geq 2 \\
\sigma_{i}\sigma_{j}\sigma_{i}=\sigma_{j}\sigma_{i}\sigma_{j} & |i-j|=1 \\
\end{array}
\right.
\right\rangle
\]

  The monoid generated by the positive generators $\{\sigma_{1},\sigma_{2},\ldots,\sigma_{n-1}\}$ is called {\it the positive braid monoid} and is denoted by $B_{n}^{+}$. An element of $B_{n}^{+}$ is called a {\it positive braid}.
   The braid group $B_{n}$, first introduced by Artin, appears in various fields of mathematics and have been studied from many points of view.

In 1990's a new feature of the braid groups, the left-invariant total orderings called {\it the Dehornoy ordering} was discovered by Dehornoy \cite{d1}. After the discovery of the orderings, numerous studies have been done from both algebraic and geometrical prospectives and the subject of braid orderings and related topics have been developed rapidly \cite{ddrw1},\cite{ddrw2}.

   In \cite{sw}, infinite families of left-invariant total orderings of the braid groups, called {\it Thurston type orderings} are constructed via the hyperbolic geometry. This family of orderings contains the Dehornoy ordering, hence it is an extension of the Dehornoy ordering. It is known that Thurston type orderings have a property called the Property $S$, and consequently they define the well-ordering when they are restricted to the positive braid monoid $B_{n}^{+}$. 

   One of the problems of braid orderings is to determine the order type of the well-ordered set $(B_{n}^{+},<)$ for a Thurston type ordering $<$, and to construct a method to compute ordinals for each element in $B_{n}^{+}$. For the Dehornoy ordering case, Burckel solved these problems by using a correspondence between positive braid words and rooted labeled trees \cite{b1},\cite{b2}. He introduced a normal form of a positive braid called {\it Burckel's normal form}, which makes it possible to compare the Dehornoy ordering and to compute ordinals. In \cite{d2}, Dehornoy gives alternative description of Burckel's normal form by introducing the $\Phi$-normal forms (alternate normal form) of positive braids. He utilized the alternate decomposition, which uses the Garside structure of braid groups. He showed that for the positive braid monoid, the $\Phi$-normal form coincides with Burckel's normal form. However, the connection between the $\Phi$-normal forms and the Dehornoy ordering is indirect. It depends on Burckel's result, so there still lie gaps to understand the Dehornoy ordering on positive braid monoid directly.

  In this paper, we give a new combinatorial description for special kinds of Thurston type orderings, called {\it finite type orderings}. The Dehornoy ordering is a typical example of finite type orderings so our results extend the results of Dehornoy and Burckel. 
  
For a positive braid word $W$ with additional structure, which we call a {\it tower of subword decompositions}, we define the code, which is a sequence of parenthesized non negative integers.
The set of codes has two lexicographical orderings $\lo$ and $\ro$. The $\C$-normal form of a positive braid $\beta$ is defined as a positive word representative of $\beta$ whose code is maximal with respect to the ordering $\ro$. This maximal code is denoted by $\C(\beta;<)$.

 Using these notions, our main theorem is stated as follows. 

\begin{thm}
\label{thm:main}
Let $<$ be a finite Thurston type ordering of $B_{n}$.
Then for $\alpha,\beta \in B_{n}^{+}$, $\alpha < \beta$ holds if and only if $\C(\alpha;<) \lo \C(\beta;<)$ holds.
\end{thm}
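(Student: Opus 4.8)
The plan is to prove the two total orders coincide by showing one implication and invoking trichotomy. Since $<$ is a total ordering on $B_n^+$ and $\lo$ is a total ordering on the set of codes, and since the $\C$-normal form is a genuine normal form (so $\beta \mapsto \C(\beta;<)$ is injective, distinct positive braids having distinct codes), it suffices to establish the forward implication $\alpha < \beta \Rightarrow \C(\alpha;<) \lo \C(\beta;<)$; the converse then follows formally. I would carry this out by induction on the braid index $n$, exploiting the recursive structure of the tower of subword decompositions. In the base case $n=2$ the monoid $B_2^+$ is freely generated by $\sigma_1$, the code of $\sigma_1^k$ is essentially $k$, and both $<$ and $\lo$ reduce to the usual order on $\mathbb{N}$. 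For the inductive step I would fix the decomposition of a representative word with respect to the top generator, writing it as $W_0\,\sigma_{n-1}\,W_1\,\sigma_{n-1}\cdots\sigma_{n-1}\,W_d$ with each $W_i$ a positive word in $\sigma_1,\ldots,\sigma_{n-2}$, and relate the top block of the code (recording $d$ together with the floor data) to the induced ordering on $B_{n-1}^+$, to which the inductive hypothesis applies.

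The heart of the argument is a comparison lemma asserting that for $\alpha,\beta \in B_n^+$ the relation $\alpha < \beta$ is decided at the highest level of the tower at which the codes $\C(\alpha;<)$ and $\C(\beta;<)$ first differ. To prove this I would combine the defining property of a finite Thurston type ordering — that $\alpha < \beta$ is governed by the prescribed side on which $\alpha^{-1}\beta$ lies, a side determined by the finite orientation data attached to each floor — with Property $S$, which guarantees that the restriction to $B_n^+$ is well-ordered and hence that comparisons stabilize at a well-defined highest differing level. The monotonicity at a single floor, namely that increasing the top-level datum strictly increases the braid in $<$ in the direction dictated by the orientation data, is where the specific geometric input of the ordering enters; I expect to reduce it to a controlled statement about how left-multiplication by $\sigma_{n-1}$ interacts with the ordering on the subwords $W_i \in B_{n-1}^+$.

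The subtlest point, and the one I expect to be the main obstacle, is reconciling the two lexicographic orderings: the $\C$-normal form is singled out as the representative whose code is $\ro$-maximal, yet the comparison in the theorem is performed with $\lo$. I would first verify that the $\ro$-maximal code is a genuine invariant of $\beta$, showing that every positive word representing $\beta$ has code $\ro$-dominated by $\C(\beta;<)$ and that this maximum is attained, so that $\C(\beta;<)$ is well-defined. The key claim is then that $\ro$-maximality forces each floor of the word into a canonical shape, and that on codes of this canonical shape the $\lo$-comparison agrees with the Thurston ordering. Establishing this compatibility between $\ro$-maximization and $\lo$-comparison is the analogue of the delicate interplay underlying Burckel's normal form, and I anticipate it will demand careful bookkeeping of how the braid relation $\sigma_i\sigma_{i+1}\sigma_i=\sigma_{i+1}\sigma_i\sigma_{i+1}$ lets one shift syllables between adjacent floors while respecting both orderings simultaneously.
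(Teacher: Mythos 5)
Your formal skeleton matches the paper's: reduce to one implication between the two total orderings (the converse following from totality plus injectivity of the code map), and induct on the braid index $n$ with $B_2^+$ as the trivial base case. But the substance of the theorem is exactly your ``comparison lemma,'' and at that point your proposal becomes circular. Property $S$ gives that $(B_n^+,<)$ is well-ordered; it does \emph{not} give that the $<$-comparison of two positive braids ``stabilizes at the highest level where the codes first differ'' --- that stabilization claim is a restatement of the theorem, not a consequence of well-orderedness. Likewise, ``monotonicity of left-multiplication by $\sigma_{n-1}$'' is not the right reduction, and your working decomposition $W_0\sigma_{n-1}W_1\cdots\sigma_{n-1}W_d$ does not match the structure on which the code is actually defined (blocks lying alternately in $\langle\sigma_1,\ldots,\sigma_{n-2}\rangle$ and $\langle\sigma_2,\ldots,\sigma_{n-1}\rangle$, plus the tail $A_{-1}$ governed by the permutation $\bk$), so even relating your datum $d$ to the top of the code would require a separate nontrivial argument.

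What is missing is the paper's actual mechanism. First, $\ro$-maximality is exploited through structural lemmas on the normal form $W=A_mA_{m-1}\cdots A_{-1}$: each block must end in a prescribed generator, and each intermediate block satisfies explicit $\succ$-estimates (it contains $\sigma_{n-2}\sigma_{n-3}\cdots\sigma_2$ or $\sigma_2\sigma_3\cdots\sigma_{n-2}$ as a subword in the insertion order $\succ$, with refinements). These estimates let one build, for $\beta$ with normal form $W$, an explicit ``floor'' word $\underline{W}$ with $\beta \geq \underline{W}$, and for $\alpha$ with normal form $V$, a ``ceiling'' word $\overline{V}$ (intermediate blocks replaced by large powers of the full twists $d_0,d_1$) with $\alpha \leq \overline{V}$; both inequalities come from Property $S$ via the bi-invariant relation $\succ$. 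Second --- and this is the genuinely geometric step your sketch has no counterpart for --- one computes \emph{explicit initial segments of the tight cutting sequence presentations} of the arcs $\overline{V}(\Gamma_1)$ and $\underline{W}(\Gamma_1)$, shows they agree up to the point where the inductive hypothesis for $B_{n-1}$ takes over, and concludes $\overline{V} < \underline{W}$, hence $\alpha < \beta$. Without some replacement for this sandwich-plus-cutting-sequence computation, the ``careful bookkeeping with the braid relation'' you anticipate has no concrete handle: moving syllables across floors with $\sigma_i\sigma_{i+1}\sigma_i=\sigma_{i+1}\sigma_i\sigma_{i+1}$ changes the code in ways that are not monotone for either $\lo$ or $\ro$, which is precisely why the paper routes the comparison through the curve diagram rather than through word combinatorics alone.
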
 

Thus on the positive braid monoid $B_{n}^{+}$, the structure of finite Thurston type orderings is simple. It can be regarded as a composition of two lexicographical orderings.

This $\C$-normal form description of finite Thurston type orderings makes it possible to compare the ordering of braids by purely algebraic and combinatorial ways.
Using the $\C$-normal form description, we determine the order type of the well-ordered set $(B_{n}^{+},<)$ for all finite Thurston type orderings $<$. This result is a partial answer to the question listed in \cite{ddrw1} and \cite{ddrw2} and provides an alternative proof of Burckel's results.

\begin{thm}
\label{thm:ordertype}
Let $<$ be a finite Thurston type ordering of $B_{n}$.
Then the order type of the well-ordered set $(B_{n}^{+},<)$ is $\omega^{\omega^{n-2}}$.
\end{thm}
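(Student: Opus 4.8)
The plan is to compute the order type of $(B_n^+, <)$ by reducing, via Theorem~\ref{thm:main}, to an order-theoretic analysis of the set of codes under the lexicographic ordering $\lo$. Since Theorem~\ref{thm:main} tells us that $\alpha < \beta$ if and only if $\C(\alpha;<) \lo \C(\beta;<)$, the map $\beta \mapsto \C(\beta;<)$ is an order isomorphism from $(B_n^+, <)$ onto its image, namely the set of all $\C$-normal codes equipped with $\lo$. So the entire computation becomes: determine precisely which sequences of parenthesized nonnegative integers arise as $\C$-normal forms of positive $n$-braids, and compute the order type of that set of sequences under the left-lexicographic order. First I would pin down the combinatorial shape of a $\C$-normal code for $B_n^+$: how many ``blocks'' or coordinates a code has, and what the admissible ranges and constraints on each entry are. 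I expect the tower of subword decompositions to organize a code into nested levels indexed by the strands $1, \ldots, n-1$, giving a hierarchical structure of depth $n-2$.

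The key computation is an induction on $n$. The base cases $n=2$ and $n=3$ should be checked by hand: for $B_2^+ \cong \mathbb{N}$ the order type is $\omega = \omega^{\omega^0}$, and for $B_3^+$ one expects $\omega^\omega = \omega^{\omega^{1}}$, recovering the classical Burckel computation. For the inductive step, I would exhibit the set of $\C$-normal codes for $B_n^+$ as built from those for $B_{n-1}^+$ in a way that, under $\lo$, corresponds to an ordinal operation. The natural guess is that each code factors as an outer coordinate (counting occurrences of the highest-index generator, or the top level of the tower) ranging over $\omega$, with the remaining data being a $\C$-normal code for a lower braid monoid; because $\lo$ compares the outer coordinate first and each fixed value of that coordinate admits an $B_{n-1}^+$-worth of continuations, this should realize the ordinal as $\bigl(\omega^{\omega^{n-3}}\bigr)^{\omega}$. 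Using the ordinal arithmetic identity $\bigl(\omega^{\omega^{n-3}}\bigr)^{\omega} = \omega^{\omega^{n-3} \cdot \omega} = \omega^{\omega^{n-2}}$, the inductive step closes and yields the desired order type.

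The main obstacle I anticipate lies in the order-of-comparison direction of the lexicographic order versus the nesting direction of the tower. The ordering $\lo$ reads coordinates from the left, but a priori the most significant digit for the ordering need not be the outermost layer of the tower decomposition; reconciling which coordinate dominates requires a careful reading of how $\C$-normality constrains the code and how $\lo$ interacts with the block structure. In particular I must verify that within the set of $\C$-normal codes the lexicographic comparison genuinely decomposes as ``compare top-level data first, then recurse,'' so that the set is order-isomorphic to a well-ordered lexicographic product and the exponential/product ordinal identities apply cleanly. A subtle related point is ensuring that the set of admissible codes is an \emph{initial} (downward-closed) segment realizing \emph{every} ordinal below $\omega^{\omega^{n-2}}$ rather than a sparse subset of the same supremum, since equality of order type demands surjectivity onto an initial segment of the ordinals, not merely the correct supremum. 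Establishing this down-closure, likely by an explicit realization argument producing a braid with any prescribed admissible code, is where the work concentrates; once it is in place, the ordinal arithmetic is routine.
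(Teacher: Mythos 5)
Your reduction via Theorem~\ref{thm:main} and your upper-bound computation match the paper: the code of an $n$-braid word is a finitely-supported sequence of codes of $(n-1)$-braid words compared lexicographically, and the identity $\bigl(\omega^{\omega^{n-3}}\bigr)^{\omega}=\omega^{\omega^{n-3}\cdot\omega}=\omega^{\omega^{n-2}}$ gives that the set of \emph{all} codes $(\Code(n,<),\lo)$ has order type $\omega^{\omega^{n-2}}$, whence the subset of normal codes has order type at most $\omega^{\omega^{n-2}}$. But your proposal has a genuine gap exactly where you locate it: the lower bound. You propose to characterize precisely which codes arise from $\C$-normal forms and to prove a realization statement for every admissible code, but you give no argument for this, and carrying it out amounts to redoing Burckel's hard combinatorial analysis of which sequences are normal. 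The paper avoids this entirely with a different idea: it restricts to the subfamily $\textsf{Code}_{\textsf{even}}(n,<)$ of codes whose nontrivial entries are all positive even integers, shows by the same inductive computation that this subfamily alone has order type $\omega^{\omega^{n-2}}$, and then invokes the Crisp--Paris solution of the Tits conjecture (the subgroup generated by $\sigma_{1}^{2},\ldots,\sigma_{n-1}^{2}$ has only the trivial commutation relations) to conclude that every even code is already the code of a $\C$-normal form, i.e.\ $\textsf{Code}_{\textsf{even}}\subseteq\textsf{Ncode}(n,<)$. Sandwiching $\textsf{Ncode}$ between two sets of order type $\omega^{\omega^{n-2}}$ finishes the proof, with Lemma~\ref{lem:conjugate} (conjugate orderings with Property $S$ have the same order type on $B_{n}^{+}$, using a positive conjugating element from Lemma~\ref{lem:conjpos}) handling non-normal orderings.

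One conceptual correction: your ``subtle related point'' that equality of order type demands the code set be an initial segment realizing every ordinal below $\omega^{\omega^{n-2}}$ is mistaken. For subsets of a well-ordered set, order type is monotone under inclusion, so it suffices to exhibit \emph{any} subset of $\textsf{Ncode}(n,<)$ of order type $\omega^{\omega^{n-2}}$; down-closure is unnecessary (just as the even integers form a subset of $\omega$ of full order type $\omega$). This is precisely why the paper's even-code trick is so economical: it produces a sparse but order-rich realized family instead of classifying all normal codes.
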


The $\C$-normal form can be seen as an extension of Burckel's normal form and Dehornoy's $\Phi$-normal form. 
We define the tail-twisted $\Phi$-normal form of positive braids, which is an extension of the $\Phi$-normal form. We show the tail-twisted $\Phi$-normal forms coincide with the $\C$-normal forms for special kinds of finite type orderings, called {\it normal} finite type orderings. Using this algebraic formulation of the $\C$-normal forms, we give a computational complexity of $\C$-normal forms and finite Thurston type orderings.

\begin{thm}
\label{thm:complexity}
   Let $<$ be an arbitrary finite Thurston type ordering of $B_{n}$.
\begin{enumerate}
\item The $\C$-Normal form $W(\beta;<)$ of a positive $n$-braid $\beta$ is computed by the time $O(l^{2}n\log n)$, where $l$ is the word length of $\beta$.
\item For an $n$-braid (not necessarily positive) $\beta$, whether $\beta>1$ holds or not can be decided by the time $O(l^{2}n^{3}\log n)$ where $l$ is the word length of $\beta$. 
\end{enumerate}
\end{thm}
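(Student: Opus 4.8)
The plan is to reduce both statements to the computation of a Garside-theoretic normal form, whose cost is classical, and then to pay only a controlled overhead for the extra bookkeeping peculiar to the $\C$-normal form.

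For part (1) I would first pass from the combinatorial definition of the $\C$-normal form to its algebraic description. For a \emph{normal} finite type ordering we have already identified $\C(\beta;<)$ with the tail-twisted $\Phi$-normal form, and this is the natural object to compute: it is assembled from the alternate (that is, $\Phi$-)decomposition of $\beta$ together with a twist of each factor by a power of the Garside element of the parabolic submonoid $B_{n-1}^{+}$. The alternate decomposition is computed by repeatedly extracting the maximal right divisor lying in the parabolic factor and applying the flip automorphism $\phi$; each such extraction is a single maximal-divisor (gcd) computation in the Garside lattice, costing $O(ln\log n)$, and a positive braid of length $l$ has $O(l)$ alternate factors, so the decomposition costs $O(l^{2}n\log n)$ in aggregate. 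The tail twist is a local operation on permutations, one per factor, and is absorbed into this bound. For an \emph{arbitrary} finite type ordering I would then reduce to the normal case: a finite type ordering is specified by finitely many combinatorial parameters, and a fixed, $\beta$-independent relabelling of the generators carries it to a normal ordering, so this reduction is a preprocessing step whose cost does not depend on $\beta$.

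For part (2) the idea is to turn the sign decision into a comparison of two \emph{positive} braids and then invoke part (1) together with Theorem \ref{thm:main}. Given an arbitrary $n$-braid $\beta$ of word length $l$, I would first produce an exponent $D=O(l)$ for which $\Delta^{2D}\beta$ is positive; this is possible because $\Delta^{2}$ is central and positive and the infimum of a length-$l$ braid is bounded below by $-O(l)$. Since $\Delta^{2D}$ is central, left-invariance gives $\beta>1$ if and only if $\Delta^{2D}\beta>\Delta^{2D}$, and both sides lie in $B_{n}^{+}$. By Theorem \ref{thm:main} this is equivalent to $\C(\Delta^{2D};<)\lo\C(\Delta^{2D}\beta;<)$, so it suffices to compute the two $\C$-normal forms by part (1) and to compare their codes lexicographically. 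The additional factor of $n^{2}$ over the bound in part (1) is accounted for by the size of the positive representative $\Delta^{2D}\beta$ and of the codes being compared.

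The main obstacle is the complexity accounting rather than the logic of the reduction. Concretely, I expect the delicate point to be carrying out the computation of the tail-twisted $\Phi$-normal form without ever expanding the central factors $\Delta^{2D}$ into generators: one must work throughout with permutation-braid factors, each stored compactly as a permutation, so that the governing size parameter stays of order the canonical length rather than the raw generator length, and one must check that the iterated maximal-divisor extractions together with the twists really do telescope into the stated bounds. A secondary point requiring care is verifying that the reduction of an arbitrary finite type ordering to a normal one is genuinely independent of $\beta$, so that it contributes nothing to the asymptotic cost.
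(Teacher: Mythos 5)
Your overall skeleton matches the paper's: identify the $\C$-normal form with the tail-twisted $\Phi$-normal form, observe that the twisted-tail bookkeeping is just extra maximal-right-divisor extractions and recursions in smaller braid groups (hence absorbed in the bound), reduce a general finite type ordering to a normal one at $\beta$-independent cost, and for part (2) reduce the sign decision to a comparison of positive braids via Theorem \ref{thm:main}. However, the paper does not re-derive the base-case bounds at all: it cites Dehornoy \cite{d2}, where \emph{both} complexity statements are proved for the $\Phi$-normal form and the Dehornoy ordering, and its own proof consists only of the two reduction observations. Your attempt to be self-contained exposes a genuine gap in part (2). With $D=O(l)$ and $\Delta^{2}$ of generator length $n(n-1)$, the positive word representing $\Delta^{2D}\beta$ has length $l'=O(l n^{2})$, and feeding this into part (1) gives $O(l'^{2}n\log n)=O(l^{2}n^{5}\log n)$ --- overshooting the claimed $O(l^{2}n^{3}\log n)$ by exactly the factor $n^{2}$ that you assert is ``accounted for by the size of the positive representative.'' You correctly diagnose that the repair is to avoid expanding $\Delta^{2D}$ into generators and to work with compactly stored permutation-braid factors, but you leave this as an unverified ``delicate point,'' and that analysis \emph{is} the actual content of the bound. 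Without either citing Dehornoy's result or supplying the permutation-braid implementation together with its cost accounting, part (2) is not proved.

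A secondary but real error: the reduction of an arbitrary finite Thurston type ordering to a normal one is \emph{not} a relabelling of the generators. Conjugation by $\Delta$ is the flip $\sigma_{i}\mapsto\sigma_{n-i}$, but a general positive conjugating braid $P$ induces an inner automorphism that does not permute the generators, and the paper's definition of the $\C$-normal form for a general ordering $<$ conjugate to a normal ordering $<_{N}$ is $W(\beta;<)=W(\beta\cdot P;<_{N})\cdot P^{-1}$: one right-multiplies by a fixed positive braid $P$ of constant length $p$ (Lemma \ref{lem:conjpos} guarantees $P$ can be taken positive) and computes the normal form of the length-$(l+p)$ positive braid $\beta P$. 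Your conclusion --- constant, $\beta$-independent preprocessing --- coincides with the paper's, but the mechanism you describe is wrong; in particular, for a general ordering the output $W(\beta;<)$ is not even a positive word, which a generator relabelling would necessarily produce. Your part (1) sketch for normal orderings (alternate decomposition via $O(l)$ maximal-divisor extractions at $O(ln\log n)$ each, with the tail twist absorbed) is consistent with what the paper outsources to \cite{d2} and is fine modulo that citation.
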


   Thus, the $\C$-normal form argument provides an efficient algorithm to compare finite Thurston type orderings in terms of Artin generators.
  
The plan of the paper is as follows.
In section 2, we summarize the definitions and basic facts about Thurston-type orderings of the braid groups. In section 3 we introduce the code of braid words and define the $\C$-normal form. The proof of Theorem \ref{thm:main} and Theorem \ref{thm:ordertype} will be given in section 4. Section 5 is devoted to the study of the relationships between the $\C$-normal form and Dehornoy's $\Phi$-normal form. We define the tail-twisted $\Phi$-normal form and show that it coincides with the $\C$-normal form, and prove Theorem \ref{thm:complexity}.

Finally we mention other aspects of our code constructions. There is another submonoid $B_{n}^{+*}$ of $B_{n}$ called {\it the dual braid monoid}, which has similar properties of the positive braid monoid $B_{n}^{+}$. Indeed, the dual braid monoid also defines the Garside structure on the braid group $B_{n}$, and two monoids $B_{n}^{+}$ and $B_{n}^{+*}$ are closely related \cite{be},\cite{bkl}. Fromentin studied the Dehornoy orderings via the dual braid monoids using the rotating normal forms, which corresponds to Dehornoy's $\Phi$-normal form in dual braid monoids settings \cite{f1},\cite{f2},\cite{f3}. Our code constructions and the $\C$-normal forms equally work for the dual braid monoids with an appropriate modification. In particular, all results in this paper holds for dual braid monoids as well. However, in the dual braid monoid case we need a new method to prove the counterpart of Theorem \ref{thm:main}, the $\C$-normal form description of the ordering. We will describe the $\C$-normal form construction for the dual braid monoid in the subsequent paper \cite{i2}.\\ 

\textbf{Acknowledgments.}
   The author would like to express his gratitude to Professor Toshitake Kohno for his encouragement and helpful suggestions. He also wishes to thank to Bert Wiest and Matthieu Calvez for careful reading of an early version of the paper, and for pointing out mistakes. This research was supported by JSPS Research Fellowships for Young Scientists.
\setcounter{thm}{2}

\section{Thurston type ordering}

\subsection{Construction via hyperbolic geometry}

 First we briefly review a construction of Thurston type orderings. For details, see \cite{sw}.
  
 Let $D_{n}$ be the $n$-punctured disc. The braid group $B_{n}$ is naturally identified with the relative mapping class group $MCG(D_{n},\partial D_{n})$, which is the group of isotopy classes of homeomorphisms of $D_{n}$ whose restrictions on $\partial D_{n}$ are identity maps.
 The generator $\sigma_{i}$ corresponds to the isotopy class of the positive half Dehn-twist along the straight line connecting the $i$-th and the $(i+1)$-st puncture points.
 
 Let us choose a complete hyperbolic metric on $D_{n}$.
Then the universal covering $\pi:\widetilde{D_{n}} \rightarrow D_{n}$ is isometrically embedded in the  hyperbolic plane $\h$. By attaching the points at infinity to $\widetilde{D_{n}}$, we obtain a topological disc. By abuse of notation, we still denote this disc by the same notation $\widetilde{D_{n}}$.
Take a lift of the base point $\widetilde{*}$ and let $C$ be the connected component of $\pi^{-1}(\partial D_{n})$ which contains $\widetilde{*}$. Then, $\partial \widetilde{D_{n}} \backslash C$ is identified with the real line $\R$.

   For a point $x \in \partial \widetilde{D_{n}} \backslash C$, we define the relation $<_{x}$ on the braid group $B_{n}$ as follows. For $[f],[g] \in B_{n}$, let us choose their representative homeomorphisms $f,g:D_{n} \rightarrow D_{n}$ and take their lifts $\widetilde{f},\widetilde{g}: \widetilde{D_{n}}\rightarrow \widetilde{D_{n}}$ so that they fix the point $\widetilde{*}$.
We define $[f] <_{x} [g]$ if $\widetilde{f}(x) < \widetilde{g}(x)$ holds under an orientation-preserving identification of $\partial \widetilde{D_{n}} \backslash C$ with the real line $\R$. Although there are many choices of representative homeomorphisms $f$ and $g$, the restrictions of their lifts are independent of these choices, so $<_{x}$ is well-defined. The relation $<_{x}$ defines a left-invariant partial ordering of $B_{n}$. If we choose a point $x$ in a nice way, $<_{x}$ becomes a total ordering.  
We call left-invariant total orderings constructed in this way {\it Thurston type orderings}.  Although there are many choices of a hyperbolic metric and a base point in the above construction, constructed families of orderings are independent of these choices. 

   It is known that Thurston type orderings fall into two types, {\it finite type} and {\it infinite type}. In next section, we formulate a finite type ordering as an ordering which can be described by curve diagrams.

\subsection{Construction of finite Thurston type ordering via curve diagram}

  In this section we give an alternative description of a special kind of Thurston type orderings using a curve diagram. The curve diagram formulation gives more accessible meanings and is closely related to our combinatorial description. 

Let $\Gamma$ be a diagram in $D_{n}$, which consists of a union of closed oriented, embedded arcs $\Gamma_{1},\Gamma_{2},\ldots,\Gamma_{n-1}$ satisfying the following properties.
\begin{itemize}
\item Interiors of $\Gamma_{i}$ are disjoint to each other.
\item The initial point of $\Gamma_{i}$ lies at $(\bigcup _{j=1}^{i-1}\Gamma_{j}) \cup (\partial D_{n})$ and the end point of $\Gamma_{i}$ lies at puncture points or $(\bigcup _{j=1}^{i-1}\Gamma_{j}) \cup (\partial D_{n})\cup (\textrm{Interior of } \Gamma_{i} )$.
\item Each component of $ D_{n}\backslash \Gamma$ is a disc or an one-punctured disc.
\end{itemize}

   We call such a diagram {\it curve diagram}. See Figure \ref{fig:curvediagram}.
We say two curve diagrams $\Gamma$ and $\Gamma'$ are tight if they form no bigon. A bigon is an embedded disc whose boundary consists of two subarcs $\gamma \subset \Gamma_{i}$, $\gamma' \subset \Gamma'_{j}$. If we equip a complete hyperbolic metric on $D_{n}$ and realize both $\Gamma$ and $\Gamma'$ as a union of geodesics, then $\Gamma$ and $\Gamma'$ become tight. Thus, we can always isotope given two curve diagrams so that they are tight.

 For distinct braids $\alpha,\beta \in B_{n}$, if we put the image of the curve diagrams $\alpha(\Gamma)$ and $\beta(\Gamma)$ tight, then they must diverge at some point. We define $\alpha <_{\Gamma} \beta$ if the image $\beta(\Gamma)$ moves the left side of $\alpha(\Gamma)$ at the first divergence point. The relation $<_{\Gamma}$ defines a left-invariant total ordering of $B_{n}$. We say a Thurston-type ordering $<$ is of {\it finite type} if it coincides with the ordering $<_{\Gamma}$ for some curve diagram $\Gamma$. Although this definition of finite type ordering is different from that in \cite{ddrw1},\cite{ddrw2}, \cite{sw}, this definition agrees with the usual definition. Conversely, every ordering constructed by a curve diagram is always a Thurston-type ordering.	
 
   We remark that the notion of curve diagrams and its defining orderings are generalized to other mapping class groups for arbitrary surface with non-empty boundaries. See \cite{rw} for such a construction of left-invariant total orderings on mapping class groups. 

We say a curve diagram $\Gamma$ is {\it normal} if all arcs $\{ \Gamma_{i} \}$ are vertical and oriented upwards. Here we say an arc $\Gamma_{i}$ is vertical if the arc $\Gamma_{i}$ is written by the equation $x= \textrm{constant}$ by using the $x$-$y$ coordinate, regarding $D_{n}$ as the unit disc removed puncture points lying on the $x$-axis. See Figure \ref{fig:curvediagram}.

 For a normal curve diagram $\Gamma$ of $D_{n}$, we define the integer $k(i)$ so that $\Gamma_{i}$ lies between the $k(i)$-th and the $(k(i)+1)$-st punctures. Then the normal curve diagram $\Gamma$ is represented as the permutation of $n-1$ integers $\bk=\{k(1),\ldots,k(n-1)\}$. A finite Thurston type ordering $<$ is called {\it normal} if $< = <_{\Gamma}$ for some normal curve diagram $\Gamma$. 

 In this paper, we mainly consider a normal finite Thurston-type ordering. This is justified by the following facts.
 
 Two left invariant total orderings $<$ and $<'$ of a group $G$ are called {\it conjugate} if there exists an element $g \in G$ such that $f<h$ is equivalent to $fg<'hg$ for all $f,h \in G$. We call such $g$ a {\it conjugating element} between $<$ and $<'$. In \cite{sw}, it is shown that the number of conjugacy classes of finite Thurston type orderings on $B_{n}$ is finite and each conjugacy class of orderings contains at least one normal finite type ordering. 
 
 For the braid group $B_{n}$ and the positive braid monoid $B_{n}^{+}$, there is a good property which makes it easier to handle conjugate orderings.
 
\begin{lem}
 \label{lem:conjpos}
 Let $<$ and $<'$ be two left-invariant total orderings of the braid group $B_{n}$.
 If $<$ and $<'$ are conjugate, then we can choose a conjugating element $\alpha$ as a positive braid.
 \end{lem}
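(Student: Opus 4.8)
The plan is to start from an arbitrary conjugating element $g$ between $<$ and $<'$ and to correct it into a positive braid by multiplying it on the right by a large power of the central full twist $\Delta^{2}$. The whole argument rests on two observations: that appending a central element to a conjugating element again yields a conjugating element between the \emph{same} pair of orderings, and that right multiplication by a sufficiently high power of $\Delta^{2}$ turns any braid into a positive one.

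First I would record the stability of conjugating elements under central multiplication. Suppose $g$ satisfies $f < h \iff fg <' hg$ for all $f,h \in B_{n}$, and let $z$ be any central element. For $f,h \in B_{n}$ we have $f(gz) = (fg)z = z(fg)$ and similarly $h(gz) = z(hg)$, where the middle equalities use that $z$ is central. Since $<'$ is left-invariant, $fg <' hg$ is equivalent to $z(fg) <' z(hg)$, that is, to $f(gz) <' h(gz)$. Combining this with the hypothesis on $g$ gives $f < h \iff f(gz) <' h(gz)$, so $gz$ is again a conjugating element between $<$ and $<'$.

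Next I would take $z$ to be a power of the full twist. The element $\Delta^{2}$ is central in $B_{n}$ and lies in $B_{n}^{+}$, so $\Delta^{2m}$ is a central positive braid for every $m \geq 0$, and by the previous step each $g\Delta^{2m}$ is a conjugating element. It then remains to choose $m$ making $g\Delta^{2m}$ positive. Here I would invoke the Garside structure of $B_{n}$: every braid can be written as $\Delta^{-k}P$ with $k \geq 0$ and $P \in B_{n}^{+}$. Writing $g = \Delta^{-k}P$ and using the centrality of $\Delta^{2}$, we obtain $g\Delta^{2m} = \Delta^{2m-k}P$, which lies in $B_{n}^{+}$ as soon as $2m \geq k$, since $\Delta$ itself is positive. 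Choosing such an $m$ and setting $\alpha = g\Delta^{2m}$ finishes the proof.

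The steps are all short; the only point that genuinely needs care is the first one, namely verifying that multiplication by a central element preserves the conjugacy relation \emph{together with the identity of the two orderings}, rather than merely producing some conjugate of $<'$. This is exactly where the left-invariance of $<'$ and the centrality of $z$ interact, and it is the hinge of the argument; the positivity correction via $\Delta^{2m}$ is then routine Garside theory.
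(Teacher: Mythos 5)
Your proof is correct and takes essentially the same route as the paper, which also corrects an arbitrary conjugating element $\alpha'$ by a large central power $\Delta^{2p}$ (note $\Delta^{2p}\alpha' = \alpha'\Delta^{2p}$ by centrality, so the left/right placement is immaterial) and cites the standard Garside fact that this becomes positive for $p$ large. The only difference is that you explicitly verify that central multiplication preserves the conjugating-element property, a step the paper leaves implicit.
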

 
 \begin{proof}
  Let $\alpha'$ be a conjugating element between $<$ and $<'$, and let $\Delta$ be the Garside fundamental braid, defined by 
\[\Delta= (\sigma_{1}\sigma_{2}\cdots\sigma_{n-1})(\sigma_{1}\sigma_{2}\cdots\sigma_{n-2})\cdots(\sigma_{1}\sigma_{2})(\sigma_{1}).\]
It is known that $\Delta^{2}$ is central in $B_{n}$ and $\Delta^{2p}\alpha'$ is a positive braid for sufficiently large $p>0$ \cite{bi}. Thus $\Delta^{2p}\alpha'$ is a positive conjugating element if $p$ is sufficiently large.  
 \end{proof}

 We closed this section by giving examples of normal finite Thurston type orderings.

\begin{exam}[The Dehornoy ordering and the reverse of the Dehornoy ordering]
\label{exam:Dtype}

   Let $\Gamma_{D}$ be a curve diagram as shown in the left diagram of Figure \ref{fig:curvediagram}, which corresponds to the trivial permutation $\{1,2,\ldots,n-1\}$. The ordering $<_{D}$ defined by $\Gamma_{D}$ is called the {\it Dehornoy ordering}.
   
Algebraically, the Dehornoy ordering $<_{D}$ is defined as follows.
For $\alpha,\beta \in B_{n}$, we define $\alpha <_{D} \beta$ if $\alpha^{-1}\beta$ admits a word representative which contains no $\sigma_{1}^{\pm1},\sigma_{2}^{\pm1},\cdots \sigma_{i-1}^{\pm1},\sigma_{i}^{-1}$ but contains at least one $\sigma_{i}$ for some $i$ \cite{d1}.

 Next we consider the curve diagram $\Gamma_{D'}$ in the middle diagram of Figure \ref{fig:curvediagram}, which corresponds to the permutation $\{n-1,n-2,\ldots,1\}$. 
 The ordering $<_{D'}$ defined by $\Gamma_{D'}$ is called the {\it reverse of the Dehornoy ordering}.
 
In some papers (for example, \cite{d2}) the Dehornoy ordering is used to represent the reverse of the Dehornoy ordering. This is not a serious difference because these two orderings are conjugate by the Garside fundamental braid $\Delta$. That is, $\alpha <_{D'} \beta$ is equivalent to $\alpha\Delta <_{D} \beta\Delta$. 

\end{exam}

\begin{exam}
\label{exam:Ttype}
Let us present the most simple finite Thurston type ordering which is not conjugate to the Dehornoy ordering.
Let $\Gamma$ be the curve diagram of $D_{4}$, as shown in the right diagram of Figure \ref{fig:curvediagram}, corresponding to the permutation $(2,1,3)$. It is known that the ordering defined by $\Gamma$ is not conjugate to the Dehornoy ordering $<_{D}$ \cite{ddrw1},\cite{ddrw2}.
\end{exam}
\begin{figure}[htbp]
 \begin{center}
\includegraphics[width=100mm]{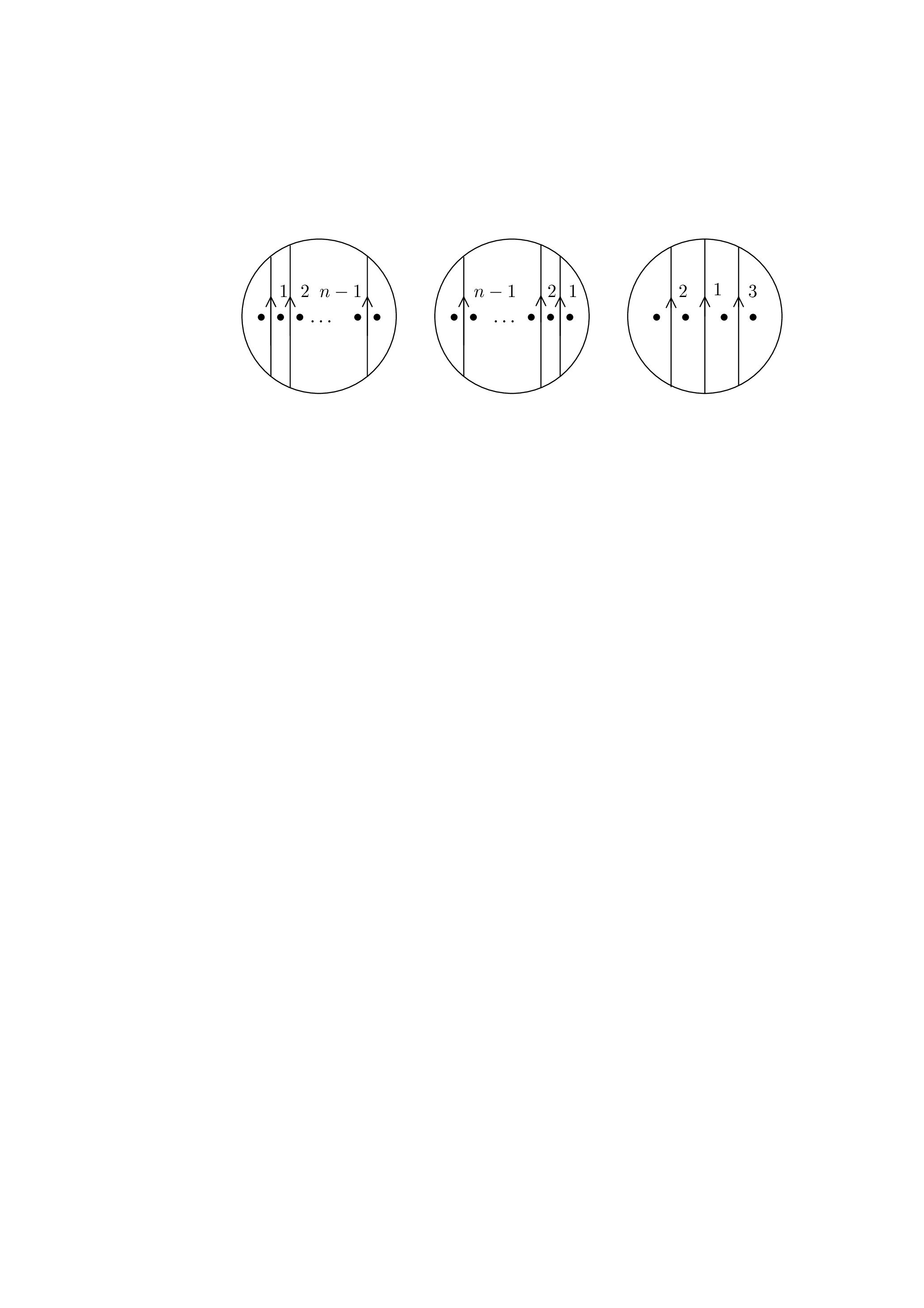}
 \end{center}
 \caption{Examples of curve diagrams}
 \label{fig:curvediagram}
\end{figure}

\subsection{Property $S$ of Thurston type orderings}

   Thurston type orderings are generalization of the Dehornoy ordering. Many properties of the Dehornoy orderings hold for Thurston type orderings as well. The most important property we use in the paper is the property $S$.
  
\begin{prop}[Property $S$ \cite{ddrw1},\cite{ddrw2}]
Let $<$ be a Thurston type ordering of $B_{n}$. Then
\[ \alpha\sigma_{i}\beta > \alpha\beta > \alpha\sigma_{i}^{-1}\beta \] 
holds for all $\alpha,\beta \in B_{n}$ and $1 \leq i \leq n-1$.
\end{prop}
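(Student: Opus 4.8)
The plan is to collapse the two displayed inequalities into a single statement by means of left-invariance, and then to prove that statement directly from the hyperbolic description of $<$ given in Section 2.1.

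\emph{Reduction.} By left-invariance, stripping the common left factor $\alpha$ shows that $\alpha\sigma_i\beta > \alpha\beta$ is equivalent to $\sigma_i\beta > \beta$, and that $\alpha\beta > \alpha\sigma_i^{-1}\beta$ is equivalent to $\beta > \sigma_i^{-1}\beta$. Writing $\gamma=\sigma_i^{-1}\beta$ in the second inequality turns it into $\sigma_i\gamma>\gamma$, which is of the same shape as the first. Hence the whole proposition reduces to the single claim that $\sigma_i\gamma > \gamma$ for every $\gamma\in B_n$ and every $1\le i\le n-1$, and I would prove it in this reduced form.

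\emph{Transfer to the universal cover.} Fix a reference point $x\in\partial\widetilde{D_n}\setminus C$ realizing $<$ as $<_x$. I would first record the composition rule for lifts: since $\widetilde\gamma$ fixes $\widetilde{*}$ and $\widetilde{\sigma_i}$ fixes $\widetilde{*}=\widetilde\gamma(\widetilde{*})$, the map $\widetilde{\sigma_i}\circ\widetilde\gamma$ is a lift of $\sigma_i\gamma$ fixing $\widetilde{*}$, hence equals $\widetilde{\sigma_i\gamma}$ by uniqueness of such lifts. Consequently $\widetilde{\sigma_i\gamma}(x)=\widetilde{\sigma_i}\bigl(\widetilde\gamma(x)\bigr)$, and setting $y=\widetilde\gamma(x)$, the desired inequality $\gamma<_x\sigma_i\gamma$ becomes simply $y<\widetilde{\sigma_i}(y)$ on the line $\partial\widetilde{D_n}\setminus C\cong\R$.

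\emph{The geometric core.} It therefore suffices to show that the canonical lift $\widetilde{\sigma_i}$ of the positive half-twist never moves a boundary point to the left, i.e. $\widetilde{\sigma_i}(y)\ge y$ for all $y\in\partial\widetilde{D_n}\setminus C$. This weak monotonicity is the part I expect to be the main obstacle, since it is where the geometry of the half-twist enters: the positive half-twist rotates its two punctures counterclockwise and pushes the curves to their left, and I would lift this local picture to $\widetilde{D_n}\subset\h$ and read off the induced action on the boundary circle, checking by a direct analysis of how the twist acts on the lift of $\partial D_n$ that it displaces $\partial\widetilde{D_n}\setminus C$ in the positive direction. Once this is granted, strictness is automatic: because $<_x$ is a \emph{total} order and $\sigma_i\gamma\ne\gamma$ (as $\sigma_i\ne 1$), the two braids are separated at $x$, so $\widetilde{\sigma_i}(y)\ne y$ for $y=\widetilde\gamma(x)$; combined with $\widetilde{\sigma_i}(y)\ge y$ this yields $\widetilde{\sigma_i}(y)>y$, hence $\sigma_i\gamma>\gamma$, for all $\gamma$.

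Alternatively, the same reduced claim $\sigma_i\gamma>\gamma$ can be argued through the curve-diagram description $<_\Gamma$: one shows that at the first divergence point the diagram $\sigma_i\gamma(\Gamma)$ lies to the left of $\gamma(\Gamma)$, which again localizes to the single assertion that the half-twist $\sigma_i$ pushes curves to their left. I would choose whichever of the two formulations makes this local half-twist computation cleanest, as the combinatorial-geometric content is identical in both.
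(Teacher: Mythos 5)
Your proposal cannot be compared against a proof in the paper, because the paper gives none: Property $S$ is imported wholesale from \cite{ddrw1},\cite{ddrw2}. So the relevant comparison is with the standard argument in those references, and your skeleton is indeed that argument. The parts you actually carry out are correct: left-invariance does collapse both inequalities to the single claim $\sigma_{i}\gamma>\gamma$; the identity $\widetilde{\sigma_{i}\gamma}=\widetilde{\sigma_{i}}\circ\widetilde{\gamma}$ for the lifts fixing $\widetilde{*}$ follows from uniqueness of lifts; and strictness does follow from weak monotonicity together with totality of $<_{x}$ (totality forces distinct braids to have distinct images of $x$).

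The gap is that the remaining statement --- $\widetilde{\sigma_{i}}(y)\geq y$ for every $y\in\partial\widetilde{D_{n}}\setminus C$ --- is not a routine verification to be ``read off'' from the local picture of a half-twist; it is the entire mathematical content of Property $S$, and your proposal only promises it (``I would check by a direct analysis''). To see why it is not local: take a representative of $\sigma_{i}$ supported in a disc $E$ containing the punctures $i$ and $i+1$. On the component of $\pi^{-1}(D_{n}\setminus E)$ containing $\widetilde{*}$ the lift is the identity, but on every other component of $\pi^{-1}(D_{n}\setminus E)$ the lift acts as some deck transformation, and which deck transformation occurs depends globally on the component, i.e.\ on the conjugates $w\sigma_{i}w^{-1}$ seen from the basepoint. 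The points of $\partial\widetilde{D_{n}}\setminus C$ lying on those components, together with the limit points of $\pi^{-1}(\partial D_{n})$, are precisely where positivity must be established, so one needs an argument organizing this whole family of deck transformations, not a lift of one local half-twist picture. Your alternative curve-diagram formulation defers exactly the same point (that $\sigma_{i}$ applied to an \emph{arbitrary} image diagram $\gamma(\Gamma)$ diverges to the left at the first divergence, uniformly in $\gamma$). As written, then, the proposal is a correct reduction plus an outline of the standard proof, with the crux left unproven; everything that is actually proved is the easy part.
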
 

Roughly speaking, the property $S$ means we can freely insert positive generators $\{\sigma_{i}\}$ at arbitrary points without decreasing the ordering. The property $S$ will be essentially used to prove our description of orderings. 

 Using property $S$, we can prove the following corollary by the same argument as for the Dehornoy ordering case.
(see \cite[Chapter1]{ddrw1})
\begin{cor}
For every Thurston type ordering $<$ of the braid group $B_{n}$, $(B_{n}^{+},<)$ is a well-ordered set. 
\end{cor}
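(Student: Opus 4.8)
The plan is to deduce the well-ordering from Property $S$ together with Higman's lemma, which is exactly the argument used in the Dehornoy case. First I would reformulate Property $S$ as a monotonicity statement for subwords. If a positive word $V$ is obtained from a positive word $U$ by inserting one or more letters $\sigma_{i}$, then inserting these letters one at a time and applying the inequality $\alpha\sigma_{i}\beta > \alpha\beta$ at each insertion shows that the braid represented by $U$ is strictly below the braid represented by $V$ (with equality precisely when no letter is inserted, i.e.\ $U=V$). Thus, writing $U \preceq V$ when $U$ is a scattered subword (subsequence) of $V$, the ordering $<$ is \emph{compatible} with $\preceq$, in the sense that $U \preceq V$ implies $[U] \le [V]$ in $(B_{n}^{+},<)$.

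Next I would invoke Higman's lemma. Since the generating set $\{\sigma_{1},\ldots,\sigma_{n-1}\}$ is finite, the set of positive words equipped with the subsequence order $\preceq$ is a well-quasi-order; that is, every infinite sequence of positive words $V_{1},V_{2},\ldots$ contains indices $i<j$ with $V_{i} \preceq V_{j}$. Lifting an arbitrary infinite sequence of positive braids $\beta_{1},\beta_{2},\ldots$ to a sequence of representative positive words and applying Higman's lemma, I obtain $i<j$ with a subword relation between the chosen representatives, whence $\beta_{i} \le \beta_{j}$ by the compatibility established above. In particular, no infinite sequence of positive braids can be strictly $<$-decreasing, since a strictly decreasing sequence would satisfy $\beta_{i} > \beta_{j}$ for all $i<j$.

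Finally, because $<$ is a total ordering, the absence of infinite strictly decreasing sequences is equivalent to well-ordering: if some nonempty subset of $B_{n}^{+}$ had no least element, one could (by dependent choice) extract an infinite strictly decreasing sequence, a contradiction. This completes the argument, and since it uses nothing about $<$ beyond Property $S$, it applies uniformly to every Thurston type ordering. The only genuine external input is Higman's lemma; the one step that must be checked with care is the compatibility statement, namely that Property $S$ really permits inserting a positive generator at an \emph{arbitrary} position, which is precisely the content of the inequality $\alpha\sigma_{i}\beta > \alpha\beta$ for all $\alpha,\beta \in B_{n}$.
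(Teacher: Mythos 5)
Your argument is correct and is essentially the paper's own: the paper proves this corollary by citing the standard Dehornoy-case argument from \cite[Chapter 1]{ddrw1}, which is precisely the combination you use --- Property $S$ making the ordering compatible with the subword (subsequence) relation, Higman's lemma giving the well-quasi-order on positive words over the finite alphabet $\{\sigma_{1},\ldots,\sigma_{n-1}\}$, and totality converting the absence of infinite descending chains into well-ordering. Nothing to add; your care about the strictness of $\alpha\sigma_{i}\beta>\alpha\beta$ at arbitrary positions is exactly the right point to check.
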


\section{Code and the $\C$-normal form}

    In this section, we define the codes and the $\C$-normal forms and provide various examples. First we consider a normal finite Thurston-type ordering and after we extend the results about normal orderings to general one.

 Let $<$ be a normal finite Thurston type ordering on $B_{n}$, define by the curve diagram $\Gamma$ which corresponding to the permutation $\bk=(k(1),k(2),\cdots,k(n-1))$. 
 Throughout this section we use the following notation.
 WE always denote $k(1)$ by $k$.
 Let $D:B_{n}^{+}\rightarrow B_{n}^{+}$ be a flip homomorphism defined by the conjugate of the Garside fundamental braid $\Delta$. That is, the map $D$ is defined by $D(\alpha)= \Delta^{-1} \alpha \Delta$ or equivalently, $D(\sigma_{i}) = \sigma_{n-i}$.
 Similarly, we define the shift map $Sh:B_{n} \rightarrow B_{n-1}$ by $Sh(\sigma_{i}) = \sigma_{i-1}$ for $i>1$ and $Sh(\sigma_{1})=1$.
 For positive braids $ \alpha_{1},\alpha_{2},\ldots,\alpha_{j} \in B_{n}^{+}$, we denote the submonoid of $B_{n}^{+}$ generated by $\{\alpha_{1},\alpha_{2},\ldots,\alpha_{j}\}$ by $\langle \alpha_{1},\alpha_{2},\ldots,\alpha_{j} \rangle$.

\subsection{Definitions of code and the $\C$-normal form}

First we define the code of a positive braid word with additional information which we call the {\it tower of subword decomposition}.

Our starting point is the positive $2$-braid monoid $B_{2}^{+}$.
There is only one normal finite Thurston type ordering, namely the Dehornoy ordering $<_{D}$ of $B_{2}^{+}$.
For a positive 2-braid word $W=\sigma_{1}^{a}$, we define the code of $W$ with respect to the ordering $<_{D}$ by the positive integer $(a)$. That is, we define $\C(\sigma_{1}^{a}; <_{D}) = (a)$.

 Assume that we have already defined the code of positive $i$-braid words with respect to arbitrary normal finite Thurston type orderings of $B_{i}$ for all $i<n$.
 Let $<$ be a normal finite Thurston type ordering of $B_{n}$.
  Now we define the code of positive $n$-braid word $W$ as follows.
   
 First of all, for $1 \leq j \leq n-2$, let us define two integers $m_{j}$ and $M_{j}$ by 
\[
\left\{
\begin{array}{l}
m_{j} = \max \{ 1,k(i)+1 \:|\: i=1,2,\cdots,j, \; k(i)<k(j+1)\} \\
M_{j}= \min \{ n,k(i) \:|\: i=1,2,\cdots,j,\; k(i)>k(j+1)\}.
\end{array}
\right.
\]
 and let $I_{j}=\{m_{j},m_{j}+1,\ldots,M_{j}\}$.
The set $I_{j}$ is nothing but the set of integers of the connected component of $[1,n] - \{k(1)+\frac{1}{2}, \ldots, k(i)+\frac{1}{2}\}$ which contains $k(i+1)$. 
For example, $m_{n-2}=k(n-1)$, $M_{n-2}=k(n-1)+1$.

Now take a decomposition of the word $W$ into the products of subwords as 
\[ W = A_{m}A_{m-1}\cdots A_{0}A_{-1} \]
 where
\[
\left\{
\begin{array}{l}
A_{-1} \in \langle \sigma_{1},\sigma_{2},\ldots, \sigma_{k-1},\sigma_{k+1},\ldots,\sigma_{n-1} \rangle \\

 A_{i} \in \langle \sigma_{1},\sigma_{2},\ldots,\sigma_{n-2} \rangle \;\;\; i \geq 0: odd \\

 A_{i} \in \langle \sigma_{2},\sigma_{3},\ldots,\sigma_{n-1} \rangle \;\;\; i \geq 0: even .
\end{array}
\right. .
\]
and take a subword decomposition of $A_{-1}$ as 
\[ A_{-1} = X_{1}X_{2}\cdots X_{n-1} \]
where $X_{j} \in \langle \sigma_{m_{j}},\sigma_{m_{j}+1},\ldots,\sigma_{M_{j}-1} \rangle$.

Let $<_{D}$ be the Dehornoy ordering of $B_{n-1}$.
Observe that $D^{i-1}(A_{i}) \in B_{n-1}^{+}$ for $i>0$. Thus from our inductive hypothesis, the code $\C(D^{i-1}(A_{i});<_{D})$ is defined if we assign the tower of subword decompositions of $A_{i}$. 
Let us denote by $\C_{m}$ the code $\C(D^{i-1}(A_{i};<_{D}))$.

 Let $<_{\textsf{res}}$ be the restriction of the ordering $<$ on $\langle \sigma_{2},\ldots,\sigma_{n-1} \rangle$. 
Under the identification of the shift map $Sh: \langle \sigma_{2},\ldots,\sigma_{n-1} \rangle \rightarrow B_{n-1}^{+}$, the ordering $<_{\textsf{res}}$ can be seen as a normal finite Thurston type ordering of $B_{n-1}^{+}$. 
Now, by giving the tower of subword decompositions of $A_{0}$, the code $\C(sh(A_{0});<_{\textsf{res}} )$ are defined.
We denote by $\C_{0}$ the code $\C(Sh(A_{0});<_{\textsf{res}})$.

Finally, let $<_{j}$ be the restriction of the ordering $<$ to $\langle \sigma_{m_{j}},\sigma_{m_{j}+1},\ldots,\sigma_{M_{j}-1} \rangle$. By identifying $\langle \sigma_{m_{j}},\sigma_{m_{j}+1},\ldots,\sigma_{M_{j}-1} \rangle$ with $B_{M_{j}-m_{j}}^{+}$ by the $(m_{j}-1)$-fold shift map $Sh^{m_{j}-1}$, we regard $<_{j}$ as a normal finite Thurston type ordering of $B_{M_{j}-m_{j}}^{+}$.
Now by giving a tower of subword decompositions, the codes $\C(Sh^{m_{j}-1}(X_{j});<_{j})$ are defined. We denote this code by $\C_{-j}$.

Now assume that we have assigned the subword decompositions of each subword $A_{i}$ and $X_{i}$ so that all of the codes $\C_{i}$ are defined. We call this iteration of subword decompositions a {\it tower of subword decompositions}.
Then we define the code $\C(W;<)$ of the positive word $W$ with the tower of subword decompositions as the sequence of codes of subwords
\[ \C(W;<) = (\ldots,\C_{i},\C_{i-1},\ldots, \C_{0},\C_{-1},\ldots,\C_{-(n-2)}).\]

This completes the definition of codes.\\

  We remark that to define the codes, it is not sufficient to choose a positive braid word. We need to indicate the tower of subword decompositions. To indicate a subword decomposition of $W$, we use the symbol $|$.
For example, To express the decomposition of $W= \sigma_{1}^{p+q}$ with $A_{-1}= \sigma_{1}^{q}$ and $A_{0}= \sigma_{1}^{p}$, we use a notation $W=\sigma_{1}^{p}|\sigma_{1}^{q}$.   
  
   Let $\Code(n,<)$ be the set of all codes of positive $n$-braid words with respect to a normal finite Thurston type ordering $<$. Next we define two lexicographical orderings $\lo$ and $\ro$ of the set $\Code(n,<)$.
To this end, we regard the code of a positive $n$-braid words as a left-infinite sequence of the code of subwords $(\ldots,\C_{i},\ldots ,\C_{-(n-2)})$, with only finitely many codes $\C_{i}$ are non-trivial. This means, for example, we regard a code $(2)$ of a 2-braid word $\sigma_{1}^{2}$ as a left infinite sequence of integers $(\ldots,0,0,2)$, not as a single integer $(2)$.

\begin{defn}[Lexicographical ordering $\lo$ and $\ro$]
Let $<$ be a normal finite Thurston type ordering.
The {\it lexicographical ordering from left} $\lo$ and the {\it lexicographical ordering from right} $\lo$ are total orderings of $\Code(n,<)$ defined by the following way.

As in the definition of codes, we begin with the case $n=2$.
For two codes $(a)$ and $(b)$ in $\Code(2,<)$, we define $(a) \lo (b)$ if and only if $a<b$, and $(a) \ro (b)$ if and only if $a<b$.

Assume that we have already defined $\lo$ and $\ro$ for all normal finite Thurston type orderings $<$ of $B_{i}$ for all $i<n$. 
 
Let 
$ \C =(\ldots,\C_{m},\ldots, \C_{-(n-2)} )$, $\C'= (\ldots,\C'_{m}, \ldots,\C'_{-(n-2)})$
be two codes in $\Code(n,*)$. 
We define $\C \lo \C'$ if 
 \[
\C_{i}=\C'_{i} \textrm{ for } i > l   \textrm{ and } \C_{l} \lo \C'_{l} \textrm{ for some } l \geq -(n-2).
\] 
Similarly, we define $\C \ro \C'$ if 
 \[
\C_{i}=\C'_{i} \textrm{ for } i < l   \textrm{ and } \C_{l} \ro \C'_{l} \textrm{ for some } l \geq -(n-2).
\] 

\end{defn}

   We are now ready to define the {\it $\C$-normal form} of a positive braids.
First of all let $<$ be a normal finite Thurston-type ordering. For a positive braid $\beta \in B_{n}^{+}$, the {\it $\C$-normal form} of $\beta$ with respect to the ordering $<$ is the positive word representative of $\beta$ which has the tower of subword decompositions whose code is maximal among the set of all codes of positive word representatives of $\beta$, with respect to the ordering $\ro$.

Since the number of positive braid word representatives of a given positive braid is finite, the $\C$-normal forms is always uniquely determined.
We denote the $\C$-normal form of a positive $n$-braid $\beta$ with respect to a normal finite Thurston type ordering $<$ by $W(\beta;<)$ or simply $W(\beta)$, and its code by $\C(\beta;<)$ or $\C(\beta)$.

 Recall that from Lemma \ref{lem:conjpos}, for conjugate left invariant total orderings $<$ and $<'$ of $B_{n}$, we can always choose a positive conjugating element $\alpha$. Using this fact, we define the $\C$-normal form for arbitrary finite Thurston type orderings.
 
\begin{defn}[The $\C$-normal form of positive braids]
Let $<$ be a finite Thurston type ordering on $B_{n}$ which is conjugate to a normal finite Thurston type ordering $<_{N}$. Let $P$ be a positive word representative of a positive conjugating element between $<$ and $<_{N}$. Fix such $<_{N}$ and $P$.

For a positive $n$-braid $\beta$, the $\C$-normal form of $\beta$ with respect to the ordering $<$ is a braid word $W(\beta;<)=W(\beta\cdot P;<_{N})\cdot P^{-1}$. The code of $\beta$, denoted by $\C(\beta;<)$, is defined by $\C(\beta;<)= \C(\beta\cdot P;<_{N})$.
\end{defn}

By definition, the set of codes $\Code(n,<)$ is a subset of $\Code(n,<_{N})$.
We define the lexicographical orderings $\lo$ and $\ro$ of $\Code(n,<)$ as the restriction of the orderings $\lo$ and $\ro$ of $\Code(n,<_{N})$ to the subset $\Code(n,<_{N})$.

For a general finite Thurston type ordering, the $\C$-normal form of a positive braid is no longer a positive braid word. Moreover, we need to fix $<_{N}$ and $P$, hence for a general orderings, the $\C$-normal form is not canonically defined.

\subsection{Examples}
   
   Before proceeding to the proof of Theorem \ref{thm:main}, we give some examples of codes and the $\C$-normal forms, which give a better understanding of the $\C$-normal form description.
 
\begin{exam}[The Dehornoy orderings of 3-braids]
\label{exam:Dehornoy3}
  
   First of all, we consider the first non-trivial case, the Dehornoy ordering of 3-braids.
   
 For a positive 3-braid word $W$, first we take a subword decomposition
\[ W = A_{m}A_{m-1}\cdots A_{0}A_{-1}\]
where $A_{i},A_{0},A_{-1} \in \langle \sigma_{2} \rangle$ for even $i$ and $A_{i} \in \langle  \sigma_{1}\rangle$ for odd $i$.
 
  The above decomposition can be explicitly written by 
\[ W= \cdots \sigma_{1}^{m_{2i-1}}\sigma_{2}^{m_{2i}}\cdots\sigma_{1}^{m_{1}}\sigma_{2}^{m_{0}}\sigma_{2}^{m_{-1}} .\] 
and the code of $W$ is given by a sequence of integers
\[ \C(W;<_{D}) = (\ldots, m_{i},m_{i-1},\ldots, m_{0},m_{-1}). \]

Now we give a concrete example.
Let $W = \sigma_{1}|\sigma_{2}|\sigma_{1}||$. Then the code of $W$ with respect to this subword decomposition ($A_{-1}=A_{0}= \varepsilon$, $A_{1}=\sigma_{1}$, $A_{2}= \sigma_{2}$, $A_{3}= \sigma_{1}$) is given by $\C(W;<_{D})= (1,1,1,0,0)$. This is not the $\C$-normal form. The $\C$-normal form of the braid $W$ is $\sigma_{2}|\sigma_{1}||\sigma_{2}$ and its code is $(1,1,0,1)$.
\end{exam}

\begin{exam}[The Dehornoy ordering of 4-braids]
\label{exam:dehornoy4}

Next we proceed to more complicated examples, the Dehornoy ordering of $B_{4}$. 
As in Example \ref{exam:Dehornoy3}, first we take a subword decomposition of a positive $4$-braid word $W$ as
\[ W = A_{m}A_{m-1}\cdots A_{1}A_{0}A_{-1}\]
 where $A_{-1},A_{0},A_{i} \in \langle \sigma_{2},\sigma_{3}\rangle$ for even $i$ and $A_{i} \in \langle  \sigma_{1},\sigma_{2}\rangle$ for odd $i$.
 
 By definition, $m_{1}=2$, $M_{1}=4$ and $m_{2}=3$, $M_{2}=4$. Thus, the subword decomposition of $A_{-1}$ is written as $A_{-1} = X_{1}X_{2}$ where $X_{1} \in \langle \sigma_{2},\sigma_{3} \rangle$ and $X_{2} \in \langle \sigma_{3} \rangle$. 
 It is easy to see the orderings $<_{1}$ and $<_{2}$ are also the Dehornoy orderings. 
Since we have already described a code of the Dehornoy ordering of $B_{3}$, we can explicitly write each code $\C_{i}$.
  
Now we give a concrete example. Let $W$ be a positive $4$-braid word with a subword decomposition
\[ W = \sigma_{1}|\sigma_{3}\sigma_{2}\sigma_{3}|\sigma_{2}^{2}\sigma_{1}^{2}||\sigma_{3}\;. \]
 That is, $A_{3}= \sigma_{1}$, $A_{2}=\sigma_{3}\sigma_{2}\sigma_{3}$, $A_{1}= \sigma_{2}^{2}\sigma_{1}^{2}$, $A_{0}= \varepsilon$, and $A_{-1}=\sigma_{3}$.
 We take a subword decomposition of $A_{-1}$ as $A_{-1} = | \sigma_{3}$.
 For a sake of simplicity, we choose the subword decompositions of each $A_{i}$ so that they have the maximal code with respect to $\ro$ among all subword decompositions of $A_{i}$.  
Then, codes $\C_{i}$ are given as follows.
\[ 
\left\{
\begin{array}{l}
\C(D^{2}(A_{3}); <_{D}) = \C(\sigma_{1}||; <_{D}) = (1,0,0)\\
\C(D^{1}(A_{2}); <_{D}) = \C(D(\sigma_{3}\sigma_{2}\sigma_{3});<_{D}) = \C(\sigma_{2}|\sigma_{1}||\sigma_{2} ; <_{D}) = (1,1,0,1) \\
 \C(D^{0}(A_{1});<_{D}) = \C(\sigma_{2}^{2}|\sigma_{1}^{2}||;<_{D}) = (2,2,0,0) \\
 \C(Sh(A_{0});<_{\textsf{res}}) = \C ( Sh(\varepsilon ) ; <_{D} ) = \C(\varepsilon ; <_{D})=(0) \\
 \C(X_{1};<_{1}) = \C(Sh(\varepsilon); <_{D}) = \C(\varepsilon ; <_{D})=(0) \\
 \C(X_{2};<_{2}) = \C(Sh(\sigma_{3}); <_{D}) = \C(\sigma_{2} ; <_{D})=(1) \\
\end{array}
\right.
\]
Consequently, the code of $W$ is given by 
\[ \C(W) = ((1,0,0),(1,1,0,1),(2,2,0,0),(0),(0),(1)).\]

The word $W$ is not the $\C$-normal form.
In fact, the $\C$-normal form of $W$ is given by $W'=\sigma_{3}^{3}|\sigma_{2}^{2}\sigma_{1}||\sigma_{2}\sigma_{3}^{2}$, and the code is given by
\[ \C(W')=((3,0,0),(2,1,0,0),(0),(1,0,0),(2)). \]

\end{exam}

\begin{exam}
\label{exam:thurston4}
Finally, we study the Thurston type ordering $<$ defined in Example \ref{exam:Ttype}, which corresponds to the permutation $(2,1,3)$. Thus, $m_{1}=1$, $M_{1}=2$ and $m_{2}=3$, $M_{2}=4$. 

 For a positive 4-braid word $W$, we take a subword decomposition of the form
\[W= A_{m}A_{m-1}\cdots A_{0}A_{-1}\]
 where $A_{i} \in \langle \sigma_{1},\sigma_{2}\rangle$ if $i$ is odd and $A_{i} \in \langle  \sigma_{2},\sigma_{3}\rangle$ if $i$ is even. Since $k=k(1)=2$, $A_{-1} \in \langle \sigma_{1},\sigma_{3}\rangle$.

First observe that the restriction of the ordering $<_{T}$ to $\langle \sigma_{2},\sigma_{3}\rangle$ is the Dehornoy ordering of $B_{3}$, under the identification by the shift map $Sh$. Thus, the code of $A_{0}$ is given as a code with respect to the Dehornoy ordering of $B_{3}$.

Next we consider a subword decomposition of $A_{-1}$. Since $m_{1}=1$, $M_{1}=2$ and $m_{2}=3$, $M_{2}=4$, the subword decomposition of $A_{-1}$ is given by $A_{-1}=X_{1}X_{2}$ where $X_{1} \in \langle \sigma_{1} \rangle$ and $X_{2} \in \langle \sigma_{3} \rangle$. 
Thus, for the word $A_{-1}=\sigma_{1}^{p}|\sigma_{3}^{q}$, $\C_{-1}=(p)$ and $\C_{-2}=(q)$.

Now we give a concrete example.
Let $W$ be the positive 4-braid word which appeared in Example \ref{exam:dehornoy4}. Take a subword decomposition 
\[ W=\sigma_{1}|\sigma_{3}\sigma_{2}\sigma_{3}\sigma_{2}^{2}|\sigma_{1}^{2}\sigma_{3}. \]
 That is, the decomposition is defined by $A_{-1}=\sigma_{1}^{2}\sigma_{3}$, $A_{0}=\sigma_{3}\sigma_{2}\sigma_{3}\sigma_{2}^{2}$, and $A_{1}=\sigma_{1}$.
 
From the above observations, each $\C(A_{i};<)$ is given as follows.
\[
\left\{
\begin{array}{l}
 \C(A_{1};<_{D})= \C(\sigma_{1}||;<_{D})= (1,0,0) \\
 \C(A_{0};<_{\textsf{res}}) = \C(Sh(\sigma_{3}|\sigma_{2}|\sigma_{3}|\sigma_{2}^{2}||);<_{D}) 
 = \C(\sigma_{2}|\sigma_{1}|\sigma_{2}|\sigma_{1}^{2}||;<_{D}) = (1,1,1,2,0,0)\\
\C(X_{1}; <_{1}) =\C(\sigma_{1}^{2};<_{D}) = (2)\\
\C(X_{2};<_{2}) = \C(Sh^{2}(\sigma_{3}); <_{D} ) = (1)
\end{array}
\right.
\]
Summarizing, the code of $W$ is given by 
\[ \C(W;<_{T}) = ((1,0,0),(1,1,1,2,0,0),(2),(1)).\]
 
This is not the $\C$-normal form. The $\C$-normal form of the braid $W$ with respect to the ordering $<_{T}$ is given by $W'=\sigma_{1}|\sigma_{3}^{3}\sigma_{2}|\sigma_{1}^{2}\sigma_{3}^{2}$, and its code is given by
\[ \C(W')= ((1,0,0),(3,1,0,0),(2),(2)). \]
\end{exam}
  
As Example \ref{exam:dehornoy4} and \ref{exam:thurston4} suggest, different orderings give completely different $\C$-normal forms.
   
\section{Combinatorial description of finite Thurston type orderings}

In this section we proof Theorem \ref{thm:main} and Theorem \ref{thm:ordertype}. Throughout this section, we use the same notation as in the Section 3. Unless otherwise stated, we always consider a normal finite Thurston type ordering $<$ which is defined by a normal curve diagram $\Gamma$ represented by the permutation $\bk=\{k(1),\ldots, k(n-1)\}$. We always put $k=k(1)$, and all the $\C$-normal forms are considered as the $\C$-normal forms with respect to the normal ordering $<$.

\subsection{Properties of $\C$-normal forms}
   
First of all we introduce a partial ordering $\succ$, which is bi-invariant under the multiplication of $B_{n}$ from both sides.

\begin{defn}
Let $W,V$ be positive $n$-braid words. We define $W \succ V$ if $W$ is obtained from $V$ by inserting positive generators $\sigma_{1},\sigma_{2},\ldots,\sigma_{n-1}$.
\end{defn}

From Property $S$, if $W \succ V$ then $W>V$ holds for all Thurston type orderings $>$. More strongly, if $W \succ V$ then $\alpha W \beta > \alpha V \beta$ holds for all $n-$braids $\alpha, \beta$ and for all Thurston type orderings $>$. Thus, we can regard the relation $\succ$ as a bi-invariant part of Thurston type orderings.

Now we begin with studying properties of the $\C$-normal form.
\begin{lem}
\label{lem:observation0}
Let $W=A_{m}A_{m-1}\cdots A_{-1}$ be a $\C$-normal form.
\begin{enumerate}
\item If $m \geq 0$ and $k \neq 1$, then $A_{0}$ is non-empty and the last letter of $A_{0}$ is $\sigma_{k}$.
\item For all $0<i\leq m$, the last letter of $A_{i}$ is $\sigma_{1}$ if $i$ is odd and
$\sigma_{n-1}$ if $i$ is even.

\end{enumerate} 
\end{lem}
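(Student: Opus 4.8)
The plan is to argue by contradiction from the defining property of the $\C$-normal form: among all positive word representatives and all towers of subword decompositions, its code is maximal for $\ro$. Since $\ro$ reads a code $(\ldots,\C_{0},\C_{-1},\ldots,\C_{-(n-2)})$ from the right, a \emph{smaller} index carries \emph{higher} priority, so the $\C$-normal form is the decomposition that pushes as much weight as possible into the low-index (right-hand) blocks. The one engine I would isolate first is the following elementary move: given any tower for a word $V$, adjoining a generator $\sigma_{j}$ as a \emph{fresh leading block} (inserting one empty buffer block if the parity of the monoids forces it) produces a valid tower for $\sigma_{j}V$ whose $\ro$-code is strictly larger. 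Indeed this alters only the highest-index (lowest-priority) entries, so the two codes agree at every index carrying the old weight and first differ, reading from the right, at the new leading block, where the new entry is nonzero. I stress that this works precisely because we \emph{prepend}: appending a generator need not raise the $\ro$-code, and may even lower it, so the leftward direction of all the moves below is essential.

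Granting this, claim (2) becomes bookkeeping of which generators cross a block boundary. For odd $i$ one has $A_{i}\in\langle\sigma_{1},\ldots,\sigma_{n-2}\rangle$, while the block to its right satisfies $A_{i-1}\in\langle\sigma_{2},\ldots,\sigma_{n-1}\rangle$ (whether $i-1>0$ or $i-1=0$); the unique generator lying in the monoid of $A_{i}$ but not in that of $A_{i-1}$ is $\sigma_{1}$. So if the last letter of $A_{i}$ were $\sigma_{j}$ with $j\ge 2$, I would re-cut the same word $W$ one letter to the left, moving $\sigma_{j}$ to the front of $A_{i-1}$; this is a legal tower, and taking for the enlarged block $\sigma_{j}A_{i-1}$ the old sub-tower of $A_{i-1}$ with $\sigma_{j}$ (after the flip $D^{i-2}$, still a generator) adjoined as a fresh leading sub-block raises $\C_{i-1}$ while leaving every block of index $<i-1$ untouched. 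The full code then increases for $\ro$, contradicting maximality; hence the last letter is $\sigma_{1}$. The even case is the mirror image, with $\sigma_{1}$ and $\sigma_{n-1}$ interchanged. Non-emptiness of the intermediate blocks also follows: were some $A_{i}$ empty, its two equal-parity neighbours would abut, and the last letter of the left neighbour — which lies in the (identical) monoid of the right neighbour — could be pushed across into it, again raising a higher-priority code.

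For claim (1) I would use that $A_{-1}\in\langle\sigma_{1},\ldots,\sigma_{k-1},\sigma_{k+1},\ldots,\sigma_{n-1}\rangle$ omits exactly $\sigma_{k}$, whereas $A_{0}\in\langle\sigma_{2},\ldots,\sigma_{n-1}\rangle$; as $k\ge 2$ under our hypothesis, the single generator in the monoid of $A_{0}$ but not in that of $A_{-1}$ is exactly $\sigma_{k}$. The same push then forces the last letter of $A_{0}$ to be $\sigma_{k}$, since any other last letter $\sigma_{j}$ ($j\ne k$) belongs to the monoid of $A_{-1}$ and can be moved into it, raising one of the strictly-higher-priority codes $\C_{-1},\ldots,\C_{-(n-2)}$. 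Non-emptiness of $A_{0}$ follows from claim (2): if $A_{0}$ were empty then $A_{1}$ would abut $A_{-1}$, and by claim (2) the last letter of $A_{1}$ is $\sigma_{1}$, which — because $k\ne 1$ — lies in the monoid of $A_{-1}$ and can be pushed in, contradicting maximality.

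I expect the genuine obstacle to be this final push into $A_{-1}$. Unlike the alternating blocks, $A_{-1}$ is itself subdivided as $X_{1}\cdots X_{n-1}$ along the interval data $I_{1},\ldots,I_{n-2}$, and its own leftmost factor $X_{1}$ is already the lowest-priority part of $A_{-1}$, so the prepended $\sigma_{j}$ cannot simply become a new leading block; it must be routed to the correct factor $X_{l}$ determined by the component of $[1,n]-\{k(1)+\tfrac12,\ldots\}$ containing $j$. I would have to check, from the laminar/nested structure of the $I_{l}$ and the commutation relations, that $\sigma_{j}$ can be carried past $X_{1},\ldots,X_{l-1}$ and adjoined at the front of $X_{l}$, and that this strictly increases $\C_{-l}$ while the strictly-higher-priority entries $\C_{-(l+1)},\ldots,\C_{-(n-2)}$ stay fixed. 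Everything else is the uniform push-to-the-right mechanism dictated by maximality for $\ro$.
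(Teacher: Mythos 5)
Your proposal is correct and takes essentially the same route as the paper: the paper's proof likewise argues by contradiction with $\ro$-maximality, re-cutting the word so that the offending last letter of $A_{i}$ migrates across the block boundary into $A_{i-1}$ (or, when $A_{i}$ is empty, taking the last letter of $A_{m}\cdots A_{i+1}$ as a new block $\sigma_{j}$ or merging it into $A_{i-1}$), and your ``prepend a fresh leading block'' engine is exactly the unstated reason the paper's re-cut decompositions have strictly bigger codes. The routing-into-$A_{-1}$ subtlety you flag is real but is equally elided by the paper (``the proofs of (1)\dots are similar''), and your laminar/commutation sketch does resolve it: for $j\neq k$ one has $j=k(l+1)$ for some $l$, and at the minimal stage $l^{*}$ where the component of $j$ equals $I_{l^{*}}$ the generator $\sigma_{j}$ lies in $\langle\sigma_{m_{l^{*}}},\ldots,\sigma_{M_{l^{*}}-1}\rangle$ while all earlier factors $X_{1},\ldots,X_{l^{*}-1}$ are supported on disjoint intervals, hence commute with $\sigma_{j}$, so prepending it to $X_{l^{*}}$ raises $\C_{-l^{*}}$ without touching the higher-priority entries.
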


\begin{proof}
We only prove (2) for odd $i$. The proofs of (1) and (2) for even $i$ are similar. 
First assume that $A_{i}$ is an empty word. Let $\sigma_{j}$ be the last letter of $A_{m}A_{m-1}\cdots A_{i+1}$. If $j \neq n-1$, then a subword decomposition of $W$ defined by $W =\cdots|\sigma_{j}|A_{i-1}|A_{i-2}|\cdots$ defines the bigger code with respect to $\ro$. If $j=n-1$, then a decomposition $\cdots |\sigma_{n-1}A_{i-1}|A_{i-2}|\cdots$ also defines the bigger code. Thus $A_{i}$ must be non-empty.
Now assume that the last letter of $A_{i}$ is not $\sigma_{1}$. Then the last letter of $A_{i}$ is $\sigma_{j}$ $(2 \leq j \leq n-2)$, so the subword decomposition $W'= \cdots A_{i+1}|A'_{i}|\sigma_{j}A_{i-1}|A_{i-2}\cdots$ defines the bigger code, which is a contradiction. 
\end{proof}

 We remark that in the case $k=1$, $A_{0}$ must be an empty word. Thus, from now on, we neglect the term $A_{0}$ if $k=1$. By Lemma \ref{lem:observation0}, we can write the $\C$-normal form in the following form.
\begin{eqnarray*}
W &= & A_{m}A_{m-1}\cdots A_{0}A_{-1}.  \\
  &= & \cdots A'_{2j+1}\sigma_{1} A'_{2j} \sigma_{n-1} \cdots A'_{0}\sigma_{k}A_{-1} \;\;(k\neq 1)\\
  &  & \cdots A'_{2j+1}\sigma_{1} A'_{2j} \sigma_{n-1} \cdots A'_{1}\sigma_{1} A_{-1} \;\;(k = 1) 
\end{eqnarray*}

Thus, the code of the $\C$-normal form must have the following form.
\[ \C(W)= (( \ldots,c_{m},\underbrace{0,\ldots, 0,0}_{*}) ,( \ldots,c_{m-1},\underbrace{0,\ldots, 0,0}_{*}),\ldots ) \]
where $c_{i}$ are non-zero codes. We call zeros $(*)$ in the code of the $\C$-normal form {\it trivial zeros}.
To consider the code of $\C$-normal forms, we do not need to consider trivial zeros, so we always neglect trivial zeros.

Next we study each word $A'_{i}$ more precisely.

\begin{lem}
\label{lem:1stobservation}
Let $W$ be the $\C$-normal form and put $A_{i},A'_{i}$ as the above.

\begin{enumerate}
\item If $m>1$ and $k \neq 1$ then $A'_{0} \succ \sigma_{2}\sigma_{3}\cdots\sigma_{k-1}$.
\item For all $0<i<m$, 
\[
\left\{
\begin{array}{l}
A'_{i} \succ \sigma_{n-2}\sigma_{n-3}\cdots\sigma_{2}   \textrm{ if } i \textrm{ is odd,}\\
A'_{i} \succ \sigma_{2}\sigma_{3}\cdots\sigma_{n-2}      \textrm{ if } i \textrm{ is even.}
\end{array}
\right.
\]
\end{enumerate} 
\end{lem}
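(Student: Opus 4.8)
The plan is to argue by contradiction from the $\ro$-maximality of the code, exactly in the spirit of the proof of Lemma~\ref{lem:observation0}. Since the statement for even $i$ is carried to the statement for odd $i$ by the flip automorphism $D$ (which sends $\langle\sigma_2,\dots,\sigma_{n-1}\rangle$ to $\langle\sigma_1,\dots,\sigma_{n-2}\rangle$, the word $\sigma_2\sigma_3\cdots\sigma_{n-2}$ to $\sigma_{n-2}\sigma_{n-3}\cdots\sigma_2$, sends a normal ordering to a normal ordering, and intertwines the $\C$-normal form constructions while interchanging the even and odd cases), it suffices to treat odd $i$; I will dispose of (1) by the same method at the end. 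For odd $i$ the claim $A'_i \succ \sigma_{n-2}\cdots\sigma_2$ is equivalent, since the last letter of $A_i$ is $\sigma_1$ by Lemma~\ref{lem:observation0}, to $A_i \succ \sigma_{n-2}\sigma_{n-3}\cdots\sigma_1$, i.e.\ to the word $A_i$ containing this descending word as a subword. The mechanism for contradicting maximality will always be to prepend a single generator from $\langle\sigma_2,\dots,\sigma_{n-1}\rangle$ to the right-hand neighbour $A_{i-1}$: this replaces $A_{i-1}$ by a word $V$ with $V\succ A_{i-1}$, hence (by monotonicity of the code under $\succ$, a consequence of Property $S$) strictly raises $\C_{i-1}$, while leaving every block $A_{j}$ with $j<i-1$ untouched. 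Since $\ro$ compares codes from the right, the comparison is decided at index $i-1$ and the new code is strictly $\ro$-larger, contradicting that $W$ is the $\C$-normal form.

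Two such moves are available, and I claim that if $A_i$ does not contain the descending subword then at least one of them applies. \emph{Move (a):} if some $\sigma_j$ with $2\le j\le n-2$ can be commuted to the end of $A_i$ (equivalently, every letter occurring after the last $\sigma_j$ of $A_i$ commutes with $\sigma_j$), write $A_i=A_i''\sigma_j$ and re-decompose as $\cdots|A_i''|\sigma_j A_{i-1}|\cdots$; here $A_i''\in\langle\sigma_1,\dots,\sigma_{n-2}\rangle$ and $\sigma_j A_{i-1}\in\langle\sigma_2,\dots,\sigma_{n-1}\rangle$, so both blocks are legal and $\C_{i-1}$ has increased. \emph{Move (b):} if $A_i$ contains no $\sigma_{n-2}$, then the trailing letter $\sigma_{n-1}$ of $A_{i+1}$ (which is nonempty and ends in $\sigma_{n-1}$ by Lemma~\ref{lem:observation0}, as $i+1$ is even and $0<i+1\le m$) commutes with every letter of $A_i$, all of which lie in $\{\sigma_1,\dots,\sigma_{n-3}\}$; moving it across $A_i$ and re-decomposing as $\cdots|A_{i+1}'|A_i|\sigma_{n-1}A_{i-1}|\cdots$ again raises $\C_{i-1}$. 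Both moves are positive and preserve the underlying braid, so each contradicts maximality.

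It remains to show that failure of both moves forces the descending subword. If (b) fails, $A_i$ contains $\sigma_{n-2}$; if (a) fails, for no $j\in\{2,\dots,n-2\}$ do all letters after the last $\sigma_j$ commute with $\sigma_j$. Writing $\ell_j$ for the position of the last $\sigma_j$ in $A_i$, the only letters not commuting with $\sigma_j$ are $\sigma_{j-1}$ and $\sigma_{j+1}$, and $A_i$ has no $\sigma_{n-1}$; hence failure of (a) at $\sigma_{n-2}$ forces $\ell_{n-3}>\ell_{n-2}$, and a downward induction (at each step the alternative $\ell_{j+1}>\ell_j$ is already excluded by the chain, and an absent intermediate generator would itself make $A_i$ word-right-divisible by the next generator up, contradicting failure of (a)) yields $\ell_{n-2}<\ell_{n-3}<\cdots<\ell_1$ with all generators present. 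Reading these positions left to right exhibits $\sigma_{n-2}\sigma_{n-3}\cdots\sigma_1$ as a subword of $A_i$, as desired. The main obstacle is precisely this bookkeeping: one must check that each rewriting stays positive and deposits its generator into the monoid legal for the adjacent block, and must handle the degenerate cases separately—namely $i=1$ (where the right neighbour is $A_0$, or is $A_{-1}$ when $k=1$) and the asymmetric statement (1) for $A_0$. In that last case the right neighbour is the structured factor $A_{-1}=X_1\cdots X_{n-1}$, the transported letter is the trailing $\sigma_1$ of $A_1$, the relevant obstruction is $\sigma_2$ rather than $\sigma_{n-2}$, and the target chain $\sigma_2\cdots\sigma_{k-1}$ is truncated at $\sigma_k$ according to the component $I_0$ around the $k$-th puncture; the argument is otherwise identical.
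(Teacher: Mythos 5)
Your proposal is, in substance, the paper's own proof, reorganized. Your Move~(b) is exactly the paper's opening step: when $A_{i}$ contains no $\sigma_{n-2}$ (for odd $i$), the trailing $\sigma_{n-1}$ of $A_{i+1}$ commutes across $A'_{i}\sigma_{1}$ and is deposited in front of $A_{i-1}$, giving the decomposition $\cdots A'_{i+1}|A'_{i}\sigma_{1}|\sigma_{n-1}A_{i-1}|\cdots$ with $\ro$-bigger code. Your Move~(a) is precisely what the paper leaves implicit in ``write $A'_{i}=V\sigma_{n-2}V'$ and argue similarly'': if the suffix after the last $\sigma_{j}$ contains no $\sigma_{j-1}$ (and, by the chain, no $\sigma_{j+1}$), commute $\sigma_{j}$ to the end, past the trailing $\sigma_{1}$, and push it into $A_{i-1}$. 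Your last-occurrence bookkeeping $\ell_{n-2}<\ell_{n-3}<\cdots$ is a transparent repackaging of the paper's nested factorization, and it is carried out correctly. Both you and the paper leave the same two points implicit: first, that prepending a generator to $A_{i-1}$ strictly raises $\C_{i-1}$ for a suitable tower --- this is a direct induction on the code construction (new or enlarged leading blocks), not really ``a consequence of Property $S$'' as you assert, since Property $S$ concerns the ordering $<$, not the codes; second, the boundary cases you flag but do not execute ($i=1$ with $k=1$, and statement~(1)), where the deposited letter must be threaded through the $X$-block structure of $A_{-1}$ --- this needs the observation that the $X$-blocks preceding the home block of the transported generator lie in monoids commuting with it, which the paper also dismisses as ``similar.''

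The one step that would fail as literally stated is your reduction of even $i$ to odd $i$ via the flip $D$. The flip does \emph{not} intertwine the $\C$-normal form constructions, because the tower conventions anchor parity at the right end: the index-$0$ block is always of even type and $A_{-1}$ carries the special $X$-decomposition. Applying $D$ to a normal-form tower forces an index shift with an empty block in the $A_{0}$ slot, and by Lemma~\ref{lem:observation0}(1) such a tower cannot be the $\C$-normal form of the flipped braid whenever $n-k\neq 1$. This is a repairable blemish rather than a fatal gap: your two moves are entirely local to $A_{i+1},A_{i},A_{i-1}$ with $i-1\geq 1$, so the mirrored argument (swap $\sigma_{1}\leftrightarrow\sigma_{n-1}$ and $\sigma_{2}\leftrightarrow\sigma_{n-2}$, ascending for descending) proves the even case verbatim --- which is evidently what the paper intends by ``the proof for even $i$ is similar.'' With the flip shortcut replaced by this mirrored direct argument, your proof matches the paper's.
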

\begin{proof}
   We only prove (2) for odd $i$. The proof of (1) and (2) for even $i$ are similar.
 Suppose $A'_{i}\not\succ \sigma_{n-2}$. Then $A'_{i} \in \langle \sigma_{1},\sigma_{2},\ldots, \sigma_{n-3} \rangle$ so $A'_{i}$ commutes with $\sigma_{n-1}$. By Lemma \ref{lem:observation0}, the last letter of $A_{i+1}$ is $\sigma_{n-1}$. Thus, we obtain a new word decomposition 
\[ W= \cdots A'_{i+1}|A'_{i}\sigma_{1}| \sigma_{n-1}A_{i-1}|A_{i-2}\cdots, \] 
which defines the bigger code. Thus, $A'_{i} \succ \sigma_{n-2}$.
 Now let us write $A'_{i}=V \sigma_{n-2}V'$, where $V' \in \langle \sigma_{1},\sigma_{2},\ldots,\sigma_{n-3}\rangle$.
  By the similar arguments, we obtain $V' \succ \sigma_{n-3}$. Iterating this argument, we conclude that $A'_{i} \succ \sigma_{n-2}\sigma_{n-1}\cdots \sigma_{2}$.

\end{proof}

   In some special cases, the above subword estimation results can be improved.
\begin{lem}
\label{lem:subword}
Let $W=A_{m}A_{m-1}\cdots A_{0} A_{-1}$ be a $\C$-normal form and $i>0$.

\begin{enumerate}
\item If $m > 0$, $k\neq 1$ and the last letter of $A'_{1}$ is not $\sigma_{1}$, then 
\[ 
A'_{0} \succ (\sigma_{3}\sigma_{2}) (\sigma_{4}\sigma_{3}) \cdots (\sigma_{j+1}\sigma_{j}) \sigma_{j}\sigma_{j+1}\cdots\sigma_{k-1}\;\;\; \textrm{for some}j<k. 
\]
\item If $i<m$ is odd and the last letter of $A'_{i+1}$ is not $\sigma_{n-1}$, then
\[
 A'_{i} \succ (\sigma_{n-3}\sigma_{n-2}) (\sigma_{n-4}\sigma_{n-3})\cdots (\sigma_{j-1}\sigma_{j}) \sigma_{j}\sigma_{j-1}\cdots\sigma_{2} \;\;\;\textrm{for some}j>2.
\]
\item If $i<m$ is even and the last letter of $A'_{i+1}$ is not $\sigma_{1}$, then
\[ 
A'_{i} \succ (\sigma_{3}\sigma_{2})(\sigma_{4}\sigma_{3})\cdots(\sigma_{j+1}\sigma_{j})\sigma_{j}\sigma_{j+1}\cdots \sigma_{n-2} \;\;\textrm{for some}j<n-2.
\]
\end{enumerate} 
\end{lem}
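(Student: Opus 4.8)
```latex
\textbf{Plan of proof.}
The plan is to prove all three parts by the same exchange-and-insertion argument already used in Lemmas~\ref{lem:observation0} and~\ref{lem:1stobservation}, now iterated to extract a longer guaranteed subword. I will present the argument for part~(2) (the case $i$ odd); parts~(1) and~(3) follow by the flip symmetry $D(\sigma_{j})=\sigma_{n-j}$ together with the obvious parity exchange, exactly as in the earlier lemmas. The governing principle throughout is that if a rewriting of the tower of subword decompositions of $W$ produces a positive word representative of the same braid with a strictly larger code with respect to $\ro$, then $W$ was not the $\C$-normal form; since the code of $W$ is maximal by definition, no such rewriting can exist, and this forbidden-rewriting constraint forces the claimed subword to appear.

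First I would fix the hypothesis: $i<m$ is odd and the last letter of $A'_{i+1}$ is \emph{not} $\sigma_{n-1}$. By Lemma~\ref{lem:observation0}(2), since $i+1$ is even, the last letter of $A_{i+1}$ \emph{is} $\sigma_{n-1}$; so the hypothesis says that the $\sigma_{n-1}$ closing $A_{i+1}$ is isolated, i.e. $A_{i+1}=A'_{i+1}\sigma_{n-1}$ with the preceding letter $A'_{i+1}$ ending in something other than $\sigma_{n-1}$. This is precisely the situation in which the trailing $\sigma_{n-1}$ can be slid rightward past a commuting prefix of $A_{i}$. Starting from Lemma~\ref{lem:1stobservation}(2) I already know $A'_{i}\succ \sigma_{n-2}\sigma_{n-3}\cdots\sigma_{2}$, so I would write
\[ A'_{i}=\sigma_{n-2}U_{n-3}\sigma_{n-3}U_{n-4}\cdots \sigma_{2}V, \]
isolating each guaranteed descending generator, and then run the commutation/exchange argument one level deeper at the top, $\sigma_{n-1}$--$\sigma_{n-2}$ interface. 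Concretely, if the factor realizing $\sigma_{n-2}$ in $A'_{i}$ were \emph{not} preceded (within $A'_{i}$) by a further $\sigma_{n-3}$ that could pair with it, the isolated $\sigma_{n-1}$ at the end of $A_{i+1}$ and the leading $\sigma_{n-2}$ of $A'_{i}$ could be regrouped using the braid relation $\sigma_{n-1}\sigma_{n-2}\sigma_{n-1}=\sigma_{n-2}\sigma_{n-1}\sigma_{n-2}$ to shift positive mass into the higher-indexed block, raising the code under $\ro$. Ruling out that shift forces the paired factor $(\sigma_{n-3}\sigma_{n-2})$ to occur; iterating downward forces $(\sigma_{n-4}\sigma_{n-3})$, and so on, until the descent stabilizes at some index $j>2$, after which the tail $\sigma_{j}\sigma_{j-1}\cdots\sigma_{2}$ is supplied by Lemma~\ref{lem:1stobservation}(2) itself.

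The key steps, in order, are: (i) use Lemma~\ref{lem:observation0}(2) to pin the last letter of $A_{i+1}$ as $\sigma_{n-1}$ and reinterpret the hypothesis as isolation of that $\sigma_{n-1}$; (ii) invoke Lemma~\ref{lem:1stobservation}(2) as the base guarantee $A'_{i}\succ\sigma_{n-2}\cdots\sigma_{2}$; (iii) set up the descending induction on the generator index, where at each level the braid relation together with the isolation hypothesis lets me exchange a trailing high-index generator into the block on the right unless the paired factor $(\sigma_{s-1}\sigma_{s})$ is present; (iv) maximality of the code under $\ro$ forbids every such exchange, which deposits the pairs $(\sigma_{n-3}\sigma_{n-2}),(\sigma_{n-4}\sigma_{n-3}),\dots,(\sigma_{j-1}\sigma_{j})$ one at a time; (v) read off the threshold $j>2$ at which the induction halts and append the Lemma~\ref{lem:1stobservation}(2) tail. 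The main obstacle I anticipate is step~(iii): I must verify that the regrouping really produces a \emph{legal} tower of subword decompositions (each $A'_{s}$ staying in its prescribed submonoid $\langle\sigma_{1},\dots,\sigma_{n-2}\rangle$ or $\langle\sigma_{2},\dots,\sigma_{n-1}\rangle$) and that the resulting code is strictly larger in the correct lexicographic direction $\ro$ rather than merely different. Because $\ro$ compares codes from the right, I must be careful that shifting a generator into a block with larger index $i+1$ genuinely increases a more-significant coordinate; confirming the direction of this comparison, and that the commutation hypothesis on the last letter of $A'_{i+1}$ is exactly what makes the exchange code-increasing, is the delicate bookkeeping that carries the whole argument.
```
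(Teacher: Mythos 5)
Your overall skeleton is the same as the paper's: the paper proves part (3) by precisely this forbidden-rewriting argument (factor $A'_{i}$ along the staircase guaranteed by Lemma~\ref{lem:1stobservation}, show that failure of the next paired letter would permit a braid-relation regrouping, and contradict maximality of the code under $\ro$), and your part (2) is its mirror image. But as written your plan contains a genuine error in the step you yourself identify as carrying the whole argument: the direction of $\ro$. By definition, $\C \ro \C'$ holds when $\C_{i}=\C'_{i}$ for all $i<l$ and $\C_{l} \ro \C'_{l}$, so the \emph{rightmost} (lowest-index) blocks are the most significant, and every code-increasing rewriting in the paper pushes letters \emph{rightward}, into $A_{i}$ or $A_{i-1}$ --- this is exactly what happens in the proofs of Lemmas~\ref{lem:observation0} and~\ref{lem:1stobservation}, and in the paper's proof of the present lemma, where the extra generator is commuted all the way into $A_{i-1}$. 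Your statements that the regrouping ``shifts positive mass into the higher-indexed block, raising the code under $\ro$'' and that ``shifting a generator into a block with larger index $i+1$ genuinely increases a more-significant coordinate'' are both backwards: a block of larger index is \emph{less} significant under $\ro$, and the rewritings move letters out of $A_{i+1}$, not into it. Run with your stated orientation, the contradiction never materializes.

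There is also a missing step before your braid relation can fire. The isolated $\sigma_{n-1}$ at the end of $A_{i+1}$ together with a $\sigma_{n-2}$ inside $A'_{i}$ only gives the pattern $\sigma_{n-1}\cdot(\textrm{interpolating word})\cdot\sigma_{n-2}$, to which no relation applies; you need $A_{i+1}$ to end in $\sigma_{n-2}\sigma_{n-1}$. The paper secures this first (its ``first we observe $j=2$'' step, mirrored): writing $A'_{i+1}=A''_{i+1}\sigma_{j}$, if $j\leq n-3$ then $\sigma_{j}$ commutes with the final $\sigma_{n-1}$ and can be pushed into $A_{i}$, a code-increasing rewriting, contradiction; hence $j=n-2$. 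Only then does commuting $V_{1}$ across produce $\sigma_{n-2}\sigma_{n-1}\sigma_{n-2}=\sigma_{n-1}\sigma_{n-2}\sigma_{n-1}$, after which conjugating $V_{1}$ through the triple and sliding the trailing $\sigma_{n-1}$ past the remaining letters (all of index at most $n-3$) deposits it in $A_{i-1}$. Finally, part (1) is not a literal consequence of the flip symmetry $D$ applied to part (2): it concerns $k=k(1)$, the block $A_{0}$, and the terminal letter $\sigma_{k}$ from Lemma~\ref{lem:observation0}(1), so it requires the analogous argument rerun with those endpoints, which is why the paper treats the cases as ``similar'' rather than as formally symmetric.
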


\begin{proof}
   We only prove (3). Other cases are similar. 
From Lemma \ref{lem:1stobservation}, we already know that $A_{i} \succ \sigma_{2}\sigma_{3}\cdots\sigma_{n-2}$. Thus, the word $A'_{i}$ is of the form 
\[A'_{i} = V_{1}\sigma_{2}V_{2}\sigma_{3}\cdots V_{n-3}\sigma_{n-2}, \]
 where $V_{j} \in \langle \sigma_{j+1},\sigma_{j+2},\cdots, \sigma_{n-1}\rangle $, possibly empty.

Put $A'_{i+1}=A''_{i+1}\sigma_{j}$. 
First we observe $j=2$. Because otherwise, the last letter of $A'_{i+1}$ commutes with $\sigma_{1}$, so we obtain a subword decomposition 
\[ W' = \cdots |A''_{i+1}\sigma_{1}| \sigma_{j}A_{i}|A_{i-1}\cdots \]
 which defines the bigger code.
Thus, the word $W$ is written as
\[ W = \cdots A_{i+1}''\sigma_{2}\sigma_{1}|V_{1}\sigma_{2}V_{2}\cdots V_{n-3}\sigma_{n-2}V_{n-2}\sigma_{n-1}|A_{i-1} \cdots. \]

 If $V_{1} \not\succ \sigma_{3}$, then $V_{1}$ contains neither $\sigma_{1}$ nor $\sigma_{3}$, so $V_{1}$ commutes with $\sigma_{2}$. Since $\sigma_{1}$ commutes with each $V_{i}$ for $i>1$, we can change $W$ as
\begin{eqnarray*}
W & = & \cdots |A_{i+1}''\sigma_{2}\sigma_{1}\sigma_{2}V_{1}V_{2}\cdots V_{n-3}\sigma_{n-2}V_{n-2}\sigma_{n-1}|A_{i-1}|\cdots \\
  & = & \cdots |A_{i+1}''\sigma_{1}\sigma_{2}\sigma_{1}V_{1}V_{2}\cdots V_{n-3}\sigma_{n-2}V_{n-2} \sigma_{n-1}|A_{i-1}|\cdots \\
  & = & \cdots |A_{i+1}'' V'_{1}\sigma_{1}\sigma_{2}\sigma_{1}V_{2}\cdots V_{n-3} \sigma_{n-2}V_{n-2}\sigma_{n-1}|A_{i-1}| \cdots \\
  & = & \cdots |A_{i+1}'' V'_{1}\sigma_{1}\sigma_{2}V_{2}\cdots V_{n-3} \sigma_{n-2}V_{n-2}\sigma_{n-1}| \sigma_{1}A_{i-1}| \cdots
\end{eqnarray*}
 by using the braid relations. Here the third equality is obtained by applying the relations $(\sigma_{1}\sigma_{2}\sigma_{1})\sigma_{1} =\sigma_{2}(\sigma_{1}\sigma_{2}\sigma_{1})$ and $(\sigma_{1}\sigma_{2}\sigma_{1})\sigma_{j} = \sigma_{j}(\sigma_{1}\sigma_{2}\sigma_{1})$ $(j \geq 4)$ repeatedly.
 
Then the last subword decomposition defines the bigger code, which is contradiction.
Thus $V_{1} \succ \sigma_{3}$.
If $V_{1} \succ \sigma_{3}\sigma_{2}$, we are done. Otherwise, by iterating the similar arguments for each $V_{i}$, we obtain the desired result.

\end{proof}

For a $\C$-normal form $W= A_{m}A_{m-1}\cdots A_{0}A_{-1}$, we define the integer $a_{i},b_{i}$ and $c_{i}$ as follows. For odd $0<i<m$, we define $a_{i} = 2$ if the last letter of $A'_{i}$ is $\sigma_{1}$ and $a_{i}=1$ otherwise. Similarly, for even $i$, we define $a_{i} = 2$ if the last letter of $A'_{i}$ is $\sigma_{n-1}$ and $a_{i}=1$ otherwise. 

By definition, $a_{i+1}=1$ if and only if the assumption of Lemma \ref{lem:subword} is satisfied for $i$. Therefore if $a_{i+1}=1$, then 
\[
\left\{
\begin{array}{l}
A_{0} \succ (\sigma_{3}\sigma_{2})(\sigma_{4}\sigma_{3})\cdots(\sigma_{j+1}\sigma_{j})\sigma_{j}\sigma_{j+1}\cdots\sigma_{k}\\
A_{i} \succ (\sigma_{3}\sigma_{2})(\sigma_{4}\sigma_{3})\cdots(\sigma_{j+1}\sigma_{j})\sigma_{j}\sigma_{j+1}\cdots\sigma_{n-2}\sigma_{n-1}^{a_{i}} \;\;(i:even)\\
A_{i} \succ (\sigma_{n-3}\sigma_{n-2})(\sigma_{n-4}\sigma_{n-3})\cdots(\sigma_{j-1}\sigma_{j})\sigma_{j}\cdots\sigma_{2}\sigma_{1}^{a_{i}} \;\;(i:odd)
\end{array}
\right.
\]
holds for some $j$. For even $i$, let us define an integer $b_{i}$ $(3 \leq b_{i} \leq n-1)$ as the maximal integer satisfying
\[
\left\{
\begin{array}{l}
A_{0} \succ (\sigma_{b_{0}} \sigma_{b_{0}-1}\cdots \sigma_{4}) (\sigma_{3}\sigma_{2})(\sigma_{4}\sigma_{3})\cdots(\sigma_{j+1}\sigma_{j})\sigma_{j}\sigma_{j+1}\cdots\sigma_{k} \\
A_{i} \succ  (\sigma_{b_{i}} \sigma_{b_{i}-1}\cdots \sigma_{4}) (\sigma_{3}\sigma_{2})(\sigma_{4}\sigma_{3})\cdots(\sigma_{j+1}\sigma_{j})\sigma_{j}\sigma_{j+1}\cdots\sigma_{n-1}^{a_{i}}. \\
\end{array}
\right.
\]
For odd $i$, we define $b_{i}$ $(1 \leq b_{i} \leq n-3)$ as the minimal integer satisfying 
\[
A_{i} \succ  (\sigma_{b_{i}} \sigma_{b_{i}+1}\cdots \sigma_{n-4}) (\sigma_{n-3}\sigma_{n-2})(\sigma_{n-4}\sigma_{n-3})\cdots(\sigma_{j-1}\sigma_{j})\sigma_{j}\sigma_{j-1}\cdots\sigma_{1}^{a_{i}}. \]

We will consider, for example when $b_{0}=3$, the above formula simply means
$ A_{0} \succ (\sigma_{3}\sigma_{2})(\sigma_{4}\sigma_{3})\cdots(\sigma_{j+1}\sigma_{j})\sigma_{j}\sigma_{j+1}\cdots\sigma_{k}$.

We define $b_{i}=0$ if $a_{i+1}=1$.
Now we show that if $a_{i+1}=1$, then the subword estimation given in Lemma \ref{lem:1stobservation} for $A_{i+1}$ can also be improved.

\begin{lem}
\label{lem:subword2}
If $a_{i+1}=1$, then there exists an integer $c_{i}$ such that 
\[
\left\{
\begin{array}{l}
A'_{i+1} \succ \sigma_{n-2}\cdots \sigma_{c_{i}+1}\sigma_{c_{i}}^{2}\sigma_{c_{i}-1}\cdots \sigma_{2} \;\;(i: even, \; c_{i}<b_{i})\\
A'_{i+1} \succ \sigma_{2}\cdots \sigma_{c_{i}-1}\sigma_{c_{i}}^{2}\sigma_{c_{i}+1}\cdots \sigma_{n-2} \;\;(i:odd, \; c_{i}>b_{i}) 
\end{array}
\right.
\]
holds.
\end{lem}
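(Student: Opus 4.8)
The plan is to run the same maximality-of-the-code argument as in Lemma \ref{lem:subword}, but one level higher: the integer $b_i$ already records a secondary doubling inside the block $A_i$, and I will show that the $\ro$-maximality of the $\C$-normal form forces a matching doubling inside $A'_{i+1}$, whose position $c_i$ is then pinned down relative to $b_i$. The two displayed cases are interchanged by the flip $D(\sigma_i)=\sigma_{n-i}$ (which sends the ascending word $\sigma_2\sigma_3\cdots\sigma_{n-2}$ to the descending word $\sigma_{n-2}\cdots\sigma_2$ and swaps the odd and even blocks), so it suffices to treat even $i$. Then $i+1$ is odd, and by Lemma \ref{lem:1stobservation} we already have $A'_{i+1}\succ\sigma_{n-2}\sigma_{n-3}\cdots\sigma_2$.

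First I would fix a realization of this descending subword inside $A'_{i+1}$ and record, for each level $t$, the factor $V_t$ of $A'_{i+1}$ lying between the chosen occurrences of $\sigma_t$ and $\sigma_{t-1}$. In these terms the asserted estimate $A'_{i+1}\succ\sigma_{n-2}\cdots\sigma_{c_i+1}\sigma_{c_i}^{2}\sigma_{c_i-1}\cdots\sigma_2$ says exactly that some $V_t$ with $t<b_i$ contains a further letter $\sigma_t$; I would take $c_i$ to be the largest such level.

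Suppose, for contradiction, that no $V_t$ with $t<b_i$ contains a $\sigma_t$. Then every letter of $A'_{i+1}$ sitting below level $b_i$ commutes past the leading descending run $\sigma_{b_i}\sigma_{b_i-1}\cdots\sigma_4$ and the zigzag $(\sigma_3\sigma_2)(\sigma_4\sigma_3)\cdots$ that the definition of $b_i$ guarantees in $A_i$. Using the standing hypothesis $a_{i+1}=1$ (so the last letter of $A'_{i+1}$ is some $\sigma_j$ with $j\neq1$) together with the same braid-relation manipulations employed in Lemma \ref{lem:subword} --- repeated use of $(\sigma_1\sigma_2\sigma_1)\sigma_1=\sigma_2(\sigma_1\sigma_2\sigma_1)$ and the far-commutation relations --- I would slide a single generator across the boundary between the blocks $A_{i+1}$ and $A_i$, absorbing it into $A_i$. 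I then exhibit the resulting subword decomposition and verify that it leaves all lower codes $\C_{i-1},\ldots,\C_{-(n-2)}$ unchanged while strictly enlarging $\C_i$; this produces a strictly larger code with respect to $\ro$, contradicting the $\ro$-maximality that defines the $\C$-normal form. Hence the doubled $\sigma_{c_i}$ is forced, and it occurs at a level $c_i<b_i$.

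The hardest part will be the bookkeeping at the block boundary. I must check that the rewriting disturbs only the two blocks $A_{i+1}$ and $A_i$, so that all codes of smaller index are genuinely fixed and the comparison under $\ro$ is decided exactly at index $i$. Equally delicate is explaining why the argument stalls at level $b_i$: the descending run $\sigma_{b_i}\cdots\sigma_4$ guaranteed in $A_i$ supplies the commutation room needed to carry a letter down only for levels strictly below $b_i$, and this is precisely what yields the sharp bound $c_i<b_i$ (respectively $c_i>b_i$ in the odd case obtained from $D$).
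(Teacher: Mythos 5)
Your setup is sound as far as it goes (reduction to one parity, factoring $A'_{i+1}$ along the descending run from Lemma \ref{lem:1stobservation}, defining $c_{i}$ as a level where a letter doubles), but the engine of your contradiction fails. Under the negation you claim that every letter of $A'_{i+1}$ below level $b_{i}$ \emph{commutes} past the descending run $\sigma_{b_{i}}\cdots\sigma_{4}$ and the zigzag; this is false, since those words contain consecutive indices and $\sigma_{t}$ does not commute with $\sigma_{t\pm 1}$ --- no purely commutation-based slide exists. There is also a direction confusion: the run and zigzag recorded by $b_{i}$ live inside $A_{i}$, to the \emph{right} of $A_{i+1}$, whereas absorbing a letter of $A_{i+1}$ into $A_{i}$ requires moving it past the tail of $A_{i+1}$ itself (in particular its final $\sigma_{1}$); the obstruction is internal to $A_{i+1}$ and is not resolved by the structure of $A_{i}$. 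Most seriously, your target --- a rewriting that strictly enlarges $\C_{i}$ while fixing all lower codes --- is unattainable: $\ro$-maximality already excludes any single-letter absorption across the $A_{i+1}|A_{i}$ boundary (this is exactly the opening move in the proof of Lemma \ref{lem:subword}, used there to pin down the tail letters), and in the paper's proof the negation together with maximality forces $A_{i+1}$ to end with the full staircase $\sigma_{b_{i}}\sigma_{b_{i}-1}\cdots\sigma_{2}\sigma_{1}$, which blocks precisely the move you propose. (Incidentally, $b_{i}$ does not record a ``doubling'' inside $A_{i}$; it records the maximal descending prefix $\sigma_{b_{i}}\cdots\sigma_{4}$ in front of the zigzag.)

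What the paper actually does is quite different and is absent from your sketch. It writes $A'_{i}=V_{1}\sigma_{2}V_{2}\sigma_{3}\cdots V_{n-3}\sigma_{n-2}$ and, from the definition of $b_{i}$, $V_{1}=W_{b_{i}}\sigma_{b_{i}}W_{b_{i}-1}\sigma_{b_{i}-1}\cdots W_{3}\sigma_{3}$ with $W_{j}\in\langle\sigma_{2},\ldots,\sigma_{j}\rangle$, then runs a double induction on $b_{i}$ and on $l(W_{b_{i}})$, driven by the genuine braid-relation cascade
\[
(\sigma_{b_{i}}\sigma_{b_{i}-1}\cdots\sigma_{1})\,\sigma_{p}=\sigma_{p-1}\,(\sigma_{b_{i}}\sigma_{b_{i}-1}\cdots\sigma_{1}) \qquad (2\leq p\leq b_{i}),
\]
which lets the staircase tail of $A_{i+1}$ consume the head of $A_{i}$ letter by letter; in the base case $\sigma_{3}\sigma_{2}\sigma_{1}\cdot\sigma_{3}\sigma_{2}=\sigma_{2}\sigma_{3}(\sigma_{1}\sigma_{2}\sigma_{1})$ produces the subword $\sigma_{1}\sigma_{2}\sigma_{1}$, after which, as in Lemma \ref{lem:subword}, a $\sigma_{1}$ is pushed rightwards through all of $A_{i}$ into $A_{i-1}$. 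The contradiction is thus an enlargement of $\C_{i-1}$ --- one index lower than you aim for --- which is what wins under $\ro$. The forced staircase tail (where the hypothesis $a_{i+1}=1$ is actually consumed), the double induction, the staircase identity, and the production of $\sigma_{1}\sigma_{2}\sigma_{1}$ are all missing from your proposal. A secondary caveat: the flip $D$ does not literally interchange the two displayed cases, since it swaps the parities of $\langle\sigma_{1},\ldots,\sigma_{n-2}\rangle$ and $\langle\sigma_{2},\ldots,\sigma_{n-1}\rangle$ and replaces the ordering by the one attached to the flipped permutation, while the code conventions fix the parity of the bottom blocks $A_{0},A_{-1}$; the paper accordingly repeats the mirrored computation rather than invoking $D$.
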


\begin{proof}

We prove the lemma for even $i$. The proof for odd $i$ is similar. 
As in the proof of Lemma \ref{lem:subword}, first let us denote $A'_{i}$ as 
\[ A'_{i} = V_{1}\sigma_{2}V_{2}\sigma_{3}\cdots V_{n-3}\sigma_{n-2} \]
where $V_{j} \in \langle \sigma_{j+1},\sigma_{j+2},\ldots,\sigma_{n-2} \rangle$. From the definition of $b_{i}$, we can write the word $V_{1}$ as
\[ V_{1}=  W_{b_{i}}\sigma_{b_{i}} W_{b_{i-1}} \sigma_{b_{i}-1}\cdots W_{4}\sigma_{4} W_{3} \sigma_{3} \]
 where $W_{j} \in \langle \sigma_{2},\ldots, \sigma_{j}\rangle$.
 
Assume that $A'_{i+1} \not \succ \sigma_{n-2}\cdots \sigma_{c_{i}+1}\sigma_{c_{i}}^{2}\sigma_{c_{i-1}}\cdots \sigma_{2} $ for all $c_{i} <b_{i}$. 
Then from the assumption that $W$ is the $\C$-normal form, the word $A_{i+1}$ is written as 
\[ A_{i+1} = \cdots (\sigma_{b_{i}}\sigma_{b_{i}-1}\cdots\sigma_{3} \sigma_{2}\sigma_{1}).\]
Because otherwise, we can produce a new word representative and subword decomposition which defines a bigger code.

We prove the lemma by induction of $b_{i}$ and $l(W_{b_{i}})$, the length of the word $W_{b_{i}}$.
First we consider the case $b_{i}=3$ and $l(W_{3})=0$. In this case, We can change the word $A_{i+1}A_{i}$ as 
\[
A_{i+1}A_{i}  = \cdots \sigma_{3}\sigma_{2}\sigma_{1}|\sigma_{3}\sigma_{2} V_{2}\cdots =  \cdots \sigma_{2}\sigma_{3}(\sigma_{1}\sigma_{2}\sigma_{1})V_{2} \cdots. 
\]
Hence, by the same argument in the proof of Lemma \ref{lem:subword}, from the existence of the subword $(\sigma_{1}\sigma_{2}\sigma_{1})$ we can construct a new word representative and subword decomposition which defines the bigger code, by pushing out $\sigma_{1}$. Hence it is contradiction.

Next we consider a general case. If $l(W_{b_{i}}) = 0$, then we can change the word $A_{i+1}A_{i}$ as  
\begin{eqnarray*}
A_{i+1}A_{i}& = &\cdots \sigma_{b_{i}}\sigma_{b_{i}-1}\cdots \sigma_{2} \sigma_{1}|\sigma_{b_{i}}W_{b_{i-1}}\sigma_{b_{i}-1}\cdots \\
 & = & \cdots \sigma_{b_{i}-1} (\sigma_{b_{i}}\sigma_{b_{i}-1}\cdots \sigma_{2}\sigma_{1})W_{b_{i-1}}\sigma_{b_{i}-1}\cdots.
\end{eqnarray*}

If$l(W_{b_{i}}) \geq 1$, let us put $W_{b_{i}}= \sigma_{p} W'_{b_{i}}$. Then we can change the word $A_{i+1}A_{i}$ as
\begin{eqnarray*}
A_{i+1}A_{i} & = &\cdots (\sigma_{b_{i}} \cdots \sigma_{p}\sigma_{p-1}\cdots \sigma_{1})\sigma_{p}W'_{b_{i}} \sigma_{b_{i}}W_{b_{i-1}}\cdots \\
 & = & \cdots( \sigma_{b_{i}} \cdots \sigma_{p-1} \sigma_{p}\sigma_{p-1}\cdots \sigma_{1}) W'_{b_{i}}\sigma_{b_{i}} W_{b_{i}-1}\cdots\\
 & = & \cdots \sigma_{p-1}( \sigma_{b_{i}}\sigma_{b_{i}-1}\cdots \sigma_{1}) W'_{b_{i}}\sigma_{b_{i}} W_{b_{i}-1}\cdots.
\end{eqnarray*}
Hence in both cases, by inductive hypothesis, we obtain a new word representative and subword decomposition which defines a bigger code. This is a contradiction.
\end{proof}

Finally, we define $c_{i}=0$ if $a_{i+1}=2$. This completes the definition of $a_{i},b_{i}$ and $c_{i}$. 

Now we construct two positive braid words $\underline{W}$ and $\overline{W}$ for each $\C$-normal form $W$.
These two words play an important role in the proof of Theorem \ref{thm:main}.
First of all, we define the word $\underline{A_{i}}$.

If $a_{i+1}=1$, then we define
\[
\left\{
\begin{array}{l}
\underline{A_{0}} = (\sigma_{b_{0}} \sigma_{b_{0}-1}\cdots \sigma_{4}) (\sigma_{3}\sigma_{2})(\sigma_{4}\sigma_{3})\cdots(\sigma_{j+1}\sigma_{j})\sigma_{j}\sigma_{j+1}\cdots\sigma_{k}\\
\underline{A_{i}} = (\sigma_{b_{i}} \sigma_{b_{i}-1}\cdots \sigma_{4}) (\sigma_{3}\sigma_{2})(\sigma_{4}\sigma_{3})\cdots(\sigma_{j+1}\sigma_{j})\\
 \hspace{3cm} (\sigma_{j} \cdots\sigma_{c_{i}-1}\sigma_{c_{i}}^{2}\sigma_{c_{i}+1}\cdots \sigma_{n-2}\sigma_{n-1}^{a_{i}}) \hspace{1cm}(i:even)\\ 
\underline{A_{i}} =   (\sigma_{b_{i}} \sigma_{b_{i}+1}\cdots \sigma_{n-4}) (\sigma_{n-3}\sigma_{n-2})(\sigma_{n-4}\sigma_{n-3})\cdots(\sigma_{j-1}\sigma_{j}) \\
\hspace{3cm}(\sigma_{j} \cdots\sigma_{c_{i}+1}\sigma_{c_{i}}^{2}\sigma_{c_{i}-1}\cdots \sigma_{2}\sigma_{1}^{a_{i}})
\hspace{1cm}(i:odd)
\end{array}
\right.
\]

We remark that, for example, if $c_{i} > j$ for even $i$, the above definition simply implies 
\[ \underline{A_{i}} = (\sigma_{b_{i}} \sigma_{b_{i}-1}\cdots \sigma_{4}) (\sigma_{3}\sigma_{2})(\sigma_{4}\sigma_{3})\cdots(\sigma_{j+1}\sigma_{j})\sigma_{j} \cdots  \sigma_{n-2}\sigma_{n-1}^{a_{i}}.  \]

If $a_{i+1}=2$, then we define
\[
\left\{
\begin{array}{l}
\underline{A_{0}} = (\sigma_{2}\sigma_{3}\cdots \sigma_{k})\\
\underline{A_{i}} = (\sigma_{2}\cdots \sigma_{c_{i}-1}\sigma_{c_{i}}^{2} \sigma_{c_{i}+1}\cdots \sigma_{n-1}^{a_{i}}) \;\;\;(i:even)\\ 
\underline{A_{i}} = (\sigma_{n-2}\sigma_{n-3}\cdots \sigma_{c_{i}+1}\sigma_{c_{i}}^{2} \sigma_{c_{i}-1}\cdots \sigma_{1}^{a_{i}}) \;\;\;(i:odd)
\end{array}
\right.
\]

By Lemma \ref{lem:1stobservation}, \ref{lem:subword}, and \ref{lem:subword2}, in either case $A_{i} \succ \underline{A_{i}}$ holds for all $i$.
Using the words $\underline{A_{i}}$, we define the braid word $\underline{W}$ by
\[\underline{W} = A_{m}\underline{A_{m-1}}\, \underline{A_{m-2}}\cdots \underline{A_{0}}A_{-1}.\]

Let $d_{0}$ and $d_{1}$ be $n$-braids defined by 
\[ d_{0} = (\sigma_{2}\sigma_{3}\cdots\sigma_{n-1})^{n-1}, d_{1}=(\sigma_{1}\sigma_{2}\cdots\sigma_{n-2})^{n-1}. \] 
We define the braid word $\overline{W}$ by
\[ \overline{W} = A_{m}d_{[m-1]}^{p} d_{[m-2]}^{p} \cdots d_{[0]}^{p} A_{-1} \] 
 where $p$ is a sufficiently large integer, and $[i]=0$ if $i$ is even and $[i]=1$ if $i$ is odd.
By taking a sufficiently large integer $p$, $\overline{W}$ is obtained by inserting positive generators $\{\sigma_{i}\}$ into the word $W$. Thus, from Property $S$, $\overline{W} \succ W \succ \underline{W}$ holds.

   Our idea to prove Theorem \ref{thm:main} is that, for two $\C$-normal forms $W$ and $V$, we compare $\underline{W}$ and $\overline{V}$ instead of comparing $V$ and $W$ directly. 
   An idea to insert or to delete the generators for a given braid word so that the action of the braid become simpler and is easier to compare had already appeared in the author's previous paper \cite{i1}, to estimate the Dehornoy floor. The arguments given belows are  straightforward generalization and refinements of the arguments given \cite{i1}.
 
By definition, the most parts of the words $\overline{W}$ and $\underline{V}$ are explicitly given, we can ``draw" the image of the curve diagram $\Gamma$ under their braid actions for initial parts, which is enough to compare $\overline{W}$ and $\underline{V}$. 
 The action of the braids $d_{0}$ and $d_{1}$ on the punctured disc $D_{n}$ are easy to describe. The braid $d_{0}$ corresponds to the full Dehn-twist along the circle enclosing the punctures $\{p_{1},p_{2},\ldots,p_{n-1}\}$ and the braid $d_{1}$ corresponds to the full Dehn-twist along the circle enclosing the punctures $\{p_{2},p_{3},\ldots,p_{n}\}$.
 
Similarly, we can see actions of $\underline{A_{i}}$, as we will explain the next section.

\subsection{Cutting sequence presentation and computations}
 
 Before stating the actions of the braids $\underline{A_{i}}$ and $\underline{W}$, we introduce a cutting sequence presentation of embedded arcs. 
This is a method of encoding an embedded curve into a sequence of signed integers, introduced in \cite{fgrrw}. 

  Let $\Sigma$ be a curve diagram consists of the horizontal arcs which connect the puncture points (see Figure \ref{fig:cutting}). For a properly embedded oriented arc $C$ in $D_{n}$ which transversely intersects $\Sigma$, we associate the sequence of signed integers as the following manner.

 Let $q_{i}$ be the $i$-th intersection point of $C$ with $\Sigma$. We define $C(i) = +k$ (resp. $-k$) if $q_{i}$ lies on $\Sigma_{k}$ and the sign of the intersection at $q_{i}$ is positive (resp. negative). Since the complement of the curve diagram $\Sigma$ is a disc, the isotopy class of $C$ can be uniquely determined by the finite sequence of signed integers $( C(1), C(2),\ldots , C(m))$.
 
We call this sequence the {\it cutting sequence presentation} of the arc $C$.
In a cutting sequence presentation, an existence of a bigon between $C$ and $\Sigma$ corresponds to an existence of the subsequence of the form $(\pm i, \mp i)$. Therefore $C$ is tight to $\Sigma$ if and only if the cutting sequence presentation of $C$ contains no subsequences of form $(\pm i, \mp i)$. Removing a bigon corresponds to removing the subsequence $(\pm i, \mp i)$.
We call the cutting sequence presentation is {\it tight} if the corresponding arc is tight to $\Sigma$.

\begin{figure}[htbp]
 \begin{center}
\includegraphics[width=70mm]{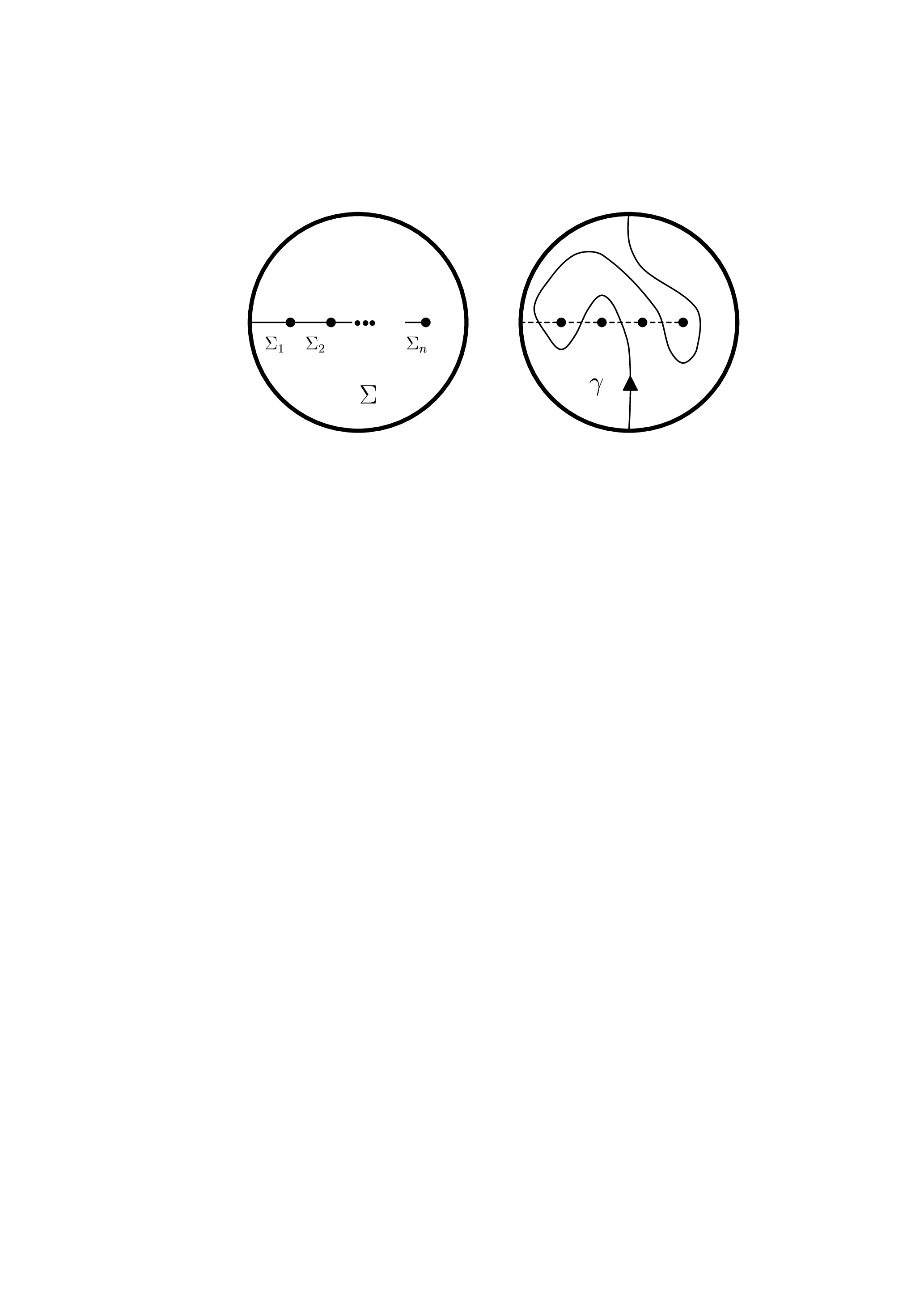}
 \end{center}
 \caption{Cutting sequence presentation}
 \label{fig:cutting}
\end{figure}
\begin{exam}
In the right figure in Figure \ref{fig:cutting}, the cutting sequence presentation of the arc $\gamma$ is $(+3,-2,+1,-4)$. As is easily observed, this is the tight cutting sequence presentation.
\end{exam}

 The actions of the braid groups on cutting sequence presentations are easy to describe.
 Since the braid $\sigma_{i-1}$ corresponds to the half Dehn-twist along the arc $\Sigma_{i}$, the cutting sequence presentation of the arc $\sigma_{i-1}(C)$ is obtained from the original cutting sequence of $C$ by replacing a subsequence $(+i)$ (resp. $(-i)$) with $(+(i-1),-i,+(i+1))$ (resp. $(-(i+1),+i,-(i-1))\,$).
The obtained cutting sequence presentation might be non-tight and depends on a particular choice of word representative of a braid, even if the original cutting sequence presentation is tight. Therefore, sometimes the tight cutting sequence presentation of an arc might change dramatically. However, in some cases we can obtain an initial segment of the tight cutting sequence presentation of an arc as the following lemma shows.
 
\begin{lem}
\label{lem:cutcalc}
Let $\beta$ be a positive $n$-braid and $\Gamma$ be a properly embedded arc. Let us denote the tight cutting sequence presentation of $\beta(\Gamma)$ by $(C(1),\ldots,C(k),C(k+1),\ldots)$ and assume that $|C(j)| \neq i$ for all $j<k$.

\begin{enumerate}
\item If $C(k)= +i$ and $C(k-1)\neq -(i-1)$, then the tight cutting sequence presentation of the braid  $\sigma_{i-1}\beta$ is 
\[ (C(1),C(2),\ldots, C(k-1), +(i-1),-i,\ldots ).\]
\item If $C(k)= -i$ and $C(k-1)\neq +(i-1)$, then the tight cutting sequence presentation of the braid $\sigma_{i-1}\beta$ is 
\[ (C(1),C(2),\ldots, C(k-1),-(i+1),+i,\ldots ).\]
\end{enumerate}
 
\end{lem}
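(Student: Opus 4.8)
The plan is to work entirely at the level of cutting sequence presentations and to exploit the fact, recorded above, that removing a bigon is exactly deleting a subsequence $(\pm j,\mp j)$; hence the tight (bigon-free) presentation of an arc is its \emph{free reduction} once we read $+j$ as $x_j$ and $-j$ as $x_j^{-1}$ in the free group on $x_1,\ldots,x_{n-1}$. In this language the effect of $\sigma_{i-1}$ on a cutting sequence is the substitution $\phi$ which fixes every $x_j^{\pm}$ with $j\neq i$ and sends $(+i)\mapsto(+(i-1),-i,+(i+1))$ and $(-i)\mapsto(-(i+1),+i,-(i-1))$. I treat case (1), where $C(k)=+i$; case (2) is identical after reversing all signs. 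First I apply $\phi$ to the tight word $u=(C(1),C(2),\ldots)$ of $\beta(\Gamma)$. Since $|C(j)|\neq i$ for $j<k$, the substitution fixes the entire prefix $C(1)\cdots C(k-1)$, while the first $\pm i$ entry $C(k)=+i$ is replaced by $(+(i-1),-i,+(i+1))$. Thus the image, before tightening, reads $(C(1),\ldots,C(k-1),+(i-1),-i,+(i+1),\ldots)$, and the whole content of the lemma is that free reduction does not disturb the displayed initial segment up through $-i$.

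Second I check that the prefix $P=(C(1),\ldots,C(k-1),+(i-1),-i)$ is \emph{already} freely reduced: the block $C(1)\cdots C(k-1)$ is reduced because it is an initial part of the tight word $u$; no cancellation occurs at the junction between $C(k-1)$ and $+(i-1)$ precisely because the hypothesis gives $C(k-1)\neq -(i-1)$; and $+(i-1),-i$ cannot cancel since their absolute values differ. Consequently $P$ survives any reduction that never cancels its last letter $-i$ against the material to its right, and so the lemma reduces to a single statement: the free reduction of the tail $R=(+(i+1),\phi(C(k+1)),\phi(C(k+2)),\ldots)$ does \emph{not} begin with $+i$. Indeed, granting this, prepending $P$ to the reduced tail produces no cancellation at the $-i \mid R$ junction, and by confluence of free reduction the tight presentation of $\sigma_{i-1}\beta(\Gamma)$ is exactly $(C(1),\ldots,C(k-1),+(i-1),-i,\ldots)$, as claimed.

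The main obstacle is precisely this last point, since a cascade of bigon removals inside the tail could a priori propagate leftward and expose a $+i$ at the front. I would control it as follows. Writing $\rho$ for the tight presentation of $\phi(C(k+1)\cdots)$, prepending the leading $+(i+1)$ can create a leading $+i$ only if $\rho$ begins with the two-letter block $(-(i+1),+i)$; otherwise the reduced $R$ begins with $+(i+1)\neq +i$. But $(-(i+1),+i)$ is exactly the initial block of the image of $-i$ under $\phi$, namely $(-(i+1),+i,-(i-1))$. Because $u$ is tight and $C(k)=+i$, we have $C(k+1)\neq -i$, so the tail does not begin with $-i$; what remains is to show that this property is not destroyed by the internal cancellations produced by $\phi$. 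I would establish this by induction on the length of the tail, tracking how the leading letter of the reduced image $\phi(\cdot)$ is produced and verifying that the forbidden block $(-(i+1),+i)$ can surface at the front only as the image of an exposed leading $-i$, which tightness forbids. This finite bookkeeping over the possible leading configurations of $\phi(x_j^{\pm})$ is the one genuinely delicate step; everything else is the routine substitution-and-cancellation analysis above.
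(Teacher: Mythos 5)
Your framework is sound and is essentially the paper's: tightening a cutting sequence is free reduction (reading $+j$, $-j$ as inverse letters), the substitution $\phi$ induced by $\sigma_{i-1}$ fixes the prefix $C(1)\cdots C(k-1)$ because $|C(j)|\neq i$ there, the prefix $P=(C(1),\ldots,C(k-1),+(i-1),-i)$ is already reduced thanks to $C(k-1)\neq -(i-1)$, and by confluence the lemma reduces to showing that the reduced tail does not begin with $+i$, equivalently that the reduced image $\rho$ of $\phi(C(k+1)C(k+2)\cdots)$ does not begin with the block $(-(i+1),+i)$. All of that is correct. But at exactly this point — which you yourself identify as ``the one genuinely delicate step'' — the proof stops: ``I would establish this by induction on the length of the tail, tracking how the leading letter \ldots is produced'' is a promissory note, not an argument. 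Since the sole nontrivial content of the lemma is ruling out leftward-propagating cascades of bigon removals, deferring it means the proof is not complete; moreover, an induction on ``leading configurations'' alone cannot succeed, because cancellations arbitrarily deep in the tail could a priori erase everything in front of a $T_-=(-(i+1),+i,-(i-1))$ block, so you would in any case need a global control on the cancellation pattern.

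The missing idea, which is how the paper closes the argument in one stroke, is that tightness of the \emph{original} sequence forces the substituted sequence $\phi(u)$ to contain no subsequence of the form $(\pm a,\pm b,\mp b,\mp a)$, so that free reduction has depth one: a letter of $\phi(u)$ disappears if and only if its \emph{immediate} neighbour in $\phi(u)$ is its inverse, and no cancellation ever creates a new cancelling pair. Concretely: the only letters of absolute value $i$ in $\phi(u)$ are the middle letters of the inserted triples $(+(i-1),-i,+(i+1))$ and $(-(i+1),+i,-(i-1))$; a cancelling adjacency can only occur where a single letter $C(j)$ with $|C(j)|\neq i$ abuts the first or last letter of a triple (a cancelling triple–triple junction would force $C(j)=+i,\ C(j+1)=-i$ or the reverse, contradicting tightness of $u$), and deleting such a pair puts next to the triple's middle letter $\mp i$ a letter that is never $\pm i$, so reduction terminates in a single pass. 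Granting this, your target claim follows immediately: $\rho$ begins with $(-(i+1),+i)$ only if either $C(k+1)=-i$ (excluded by tightness, since $C(k)=+i$) or the image of a nonempty tight prefix cancels completely, which is impossible because middle letters of triples never cancel and a $\pm i$-free block is left untouched. Likewise $-i$ in $P$ survives because its neighbours $+(i-1)$ and $+(i+1)$ are not, and by depth-one reduction never become, $+i$. So your route is salvageable, but only by proving precisely this no-cascade property — which is the step your write-up omits.
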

\begin{proof}
We prove (1). The proof of (2) is similar.
By a direct computation, the cutting sequence presentation of $\sigma_{i}\beta(\Gamma)$ is given by 
\[ (C(1),C(2),\ldots, C(k-1), +(i-1),-i,+i+1, \ldots ). \] 

Since the original cutting sequence is tight, this sequence does not contain a subsequence of form $(\pm a, \pm b, \mp b,\mp a)$. This means, under the process of making the sequence tight, $C(j)$ disappear if and only if $C(j \pm 1) = -C(j)$. From the assumption of $C(k-1) \neq -(i-1)$, the term $+(i-1)$ does not disappear, so the tight cutting sequence presentation of $\sigma_{i}\beta(\Gamma)$ is given by  
\[ (C(1),C(2),\ldots, C(k-1), +(i-1),-i,\ldots ).\]
\end{proof}

  By using Lemma \ref{lem:cutcalc}, we can calculate an initial segment of the tight cutting sequence presentation of the following special braids which are related to $\overline{W}$ and $\underline{W}$.

\begin{prop}
\label{prop:cutcalc}

 Let $W = A_{m}A_{m-1}\cdots A_{-1}$ be the $\C$-normal form of a positive $n$-braid $\beta$ and 
\[ \overline{W}= A_{m}d_{[m-1]}^{p} d_{[m-2]}^{p} \cdots d_{[0]}^{p} A_{-1},\; \underline{W}= A_{m}  \underline{A_{m-1}}\, \underline{A_{m-2}}\cdots \underline{A_{0}}A_{-1}.\]
be positive braid words defined in the previous section.
\begin{description}
\item[($A$)] The tight cutting sequence presentation of the arc \\
 $[\sigma_{1}^{a}d_{[m-1]}^{p} \cdots d_{[0]}^{p} A_{-1} ](\Gamma_{1}) $ for odd $m$ is
\[
\left\{
\begin{array}{l}
 ( \underbrace{ +1,+1,\ldots , +1}_{(m+1)\slash 2},-2,+3,\ldots ) \;\;\; (a=1)\\
 ( \underbrace{ +1,+1,\ldots , +1}_{(m+1)\slash 2},-3,+2,\ldots ) \;\;\; (a=2)\\
\end{array}
\right.
\]

\item[($A'$)] The tight cutting sequence presentation of the arc  \\
$ [\sigma_{n-1}^{a} d_{[m-1]}^{p} d_{[m-2]}^{p} \cdots d_{[0]}^{p} A_{-1} ](\Gamma_{1}) $ for even $m$ is
\[
\left\{
\begin{array}{l}
 ( \underbrace{ +1,+1,\ldots , +1}_{m\slash 2},+n,-(n-1),\ldots ) \;\;\; (a=1)\\
 ( \underbrace{ +1,+1,\ldots , +1}_{m\slash 2},+(n-1),-n,\ldots ) \;\;\; (a=2)\\
\end{array}
\right.
\]

\item[($B$)] The tight cutting sequence presentation of the arc \\
$ [\sigma_{1}^{a_{m}}\underline{A_{m-1}}\, \underline{A_{m-2}}\cdots \underline{A_{0}}A_{-1}](\Gamma_{1}) $ for odd $m$ is
\[
\left\{
\begin{array}{l}
( \underbrace{ +1,+1,\ldots, +1 }_{(m+1)\slash 2},-2,+3,\ldots )  \textrm{   if } a_{m}=1 \\
( \underbrace{ +1,+1,\ldots, +1 }_{(m+1)\slash 2},-3,+2,\ldots ) \textrm{   if } a_{m}=2.
\end{array}
\right.
\]

\item[($B'$)] The tight cutting sequence presentation of the arc \\
 $ [\sigma_{n-1}^{a_{m}}\underline{A_{m-1}}\, \underline{A_{m-2}}\cdots \underline{A_{0}}A_{-1}](\Gamma_{1}) $ for even $m$ is
\[
\left\{
\begin{array}{l}
( \underbrace{ +1,+1,\ldots, +1 }_{m\slash 2},+n,-(n-1),\ldots ) \textrm{   if } a_{m}=1 \\
( \underbrace{ +1,+1,\ldots, +1 }_{m\slash 2},+(n-1),-n,\ldots ) \textrm{   if } a_{m}=2.
\end{array}
\right.
\]

\end{description}
\end{prop}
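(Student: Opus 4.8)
The plan is to prove the four statements together by induction on the number of blocks, with Lemma~\ref{lem:cutcalc} serving as the sole computational engine. That lemma says that when $\sigma_{i-1}$ is applied to a tight cutting sequence whose tail has not yet reached the index $i$, only a bounded initial segment is altered, and in a completely determined way. Since every block is an explicit positive word (namely $d_{[i]}^{p}$ in cases $(A)$, $(A')$ and $\underline{A_{i}}$ in cases $(B)$, $(B')$), I would expand each block into its Artin generators and feed them one at a time into Lemma~\ref{lem:cutcalc}, reducing the whole proposition to a finite bookkeeping on the leading terms.

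I would run the induction on the capless truncations $\overline{W}_{j}=d_{[j-1]}^{p}\cdots d_{[0]}^{p}A_{-1}$ and $\underline{W}_{j}=\underline{A_{j-1}}\cdots\underline{A_{0}}A_{-1}$, proving that the tight cutting sequence of $\overline{W}_{j}(\Gamma_{1})$ (resp. $\underline{W}_{j}(\Gamma_{1})$) opens with a run of $+1$'s whose length grows by one for every two blocks, followed by a single marker that is determined by the parity of $j$ and on which the next generator will act. The base case $j=0$ is the direct computation of the tight cutting sequence of $A_{-1}(\Gamma_{1})$ out of the explicit factorization $A_{-1}=X_{1}\cdots X_{n-1}$ and the fact that $\Gamma_{1}$ is the vertical arc lying between the $k$-th and $(k+1)$-st punctures. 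Each block reverses the parity of $j$ and moves the active marker from one end of the disc to the other, so the odd and even statements for the $d$-blocks are the alternating stages of this single induction; I would write out the odd case $(A)$ as representative, the even case $(A')$ being strictly analogous, and treat the $\underline{A}$-block cases $(B)$, $(B')$ by comparison with them as described below.

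For the inductive step I would apply one block to the arc produced at stage $j-1$. Expanding $d_{[j-1]}^{p}$ into its generators, applied from right to left, I would invoke Lemma~\ref{lem:cutcalc} repeatedly, verifying at each stage the non-cancellation hypothesis $C(k-1)\neq\mp(i-1)$ so that no collapse beyond the predicted one can occur; the point of choosing $p$ large is exactly that the arc winds far enough for its head to settle into the standard position, which locks in one further leading $+1$ of the relevant parity. Once $j=m$, I would apply the cap $\sigma_{1}^{a}$ (resp. $\sigma_{1}^{a_{m}}$): by Lemma~\ref{lem:cutcalc} one application of $\sigma_{1}$ turns the terminal marker into the leading form $+1,-2,+3,\dots$, and a second application, used precisely when $a=2$, turns the resulting $-2$ into $-3,+2$, which yields the two advertised sequences.

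The step I expect to be the main obstacle is carrying cases $(B)$ and $(B')$ in lockstep with $(A)$ and $(A')$, that is, proving that the short ad hoc word $\underline{A_{i}}$ acts on the already-computed initial segment exactly as the long full twist $d_{[i]}^{p}$ does. On the full cutting sequence the two differ wildly, since $d_{[i]}^{p}$ winds the arc many times while $\underline{A_{i}}$ makes only a single pass, so all the content is that they agree on the leading terms: $\underline{A_{i}}$ must still drive the head of the arc to the same position and lock in the same single $+1$. This is where the internal structure of $\underline{A_{i}}$ is used in full; the prefix $\sigma_{b_{i}}\cdots$ together with the squared middle letter $\sigma_{c_{i}}^{2}$ is exactly what guarantees that every intermediate intersection created by $\underline{A_{i}}$ either falls beyond the recorded initial segment or cancels under tightening, leaving only the governing $+1$. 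The maximality of $b_{i}$ and $c_{i}$ proved in Lemmas~\ref{lem:subword} and~\ref{lem:subword2} is the precise input that forces these cancellations to mirror those of the clean twist, and I expect verifying this matching to be the most delicate part of the argument.
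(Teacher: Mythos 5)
Your proposal matches the paper's own proof in substance: the paper verifies all four cases by iterated application of Lemma~\ref{lem:cutcalc} along the explicit generator expansions of the blocks, packaged as six reusable sub-calculations (for ascending runs $\sigma_{q}\sigma_{q+1}\cdots\sigma_{s}$, descending runs, and the pairs $(\sigma_{q+1}\sigma_{q})$, $(\sigma_{q-1}\sigma_{q})$ that occur in $\underline{A_{i}}$), which is exactly your generator-by-generator bookkeeping on leading segments of the tight cutting sequence, composed block by block from $A_{-1}$ (which fixes $\Gamma_{1}$ since it omits $\sigma_{k}$) up to the cap $\sigma_{1}^{a}$ or $\sigma_{n-1}^{a}$. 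One small correction of attribution: the non-cancellation checks in cases $(B)$, $(B')$ use only the explicit shape of $\underline{A_{i}}$ (including the inequalities $c_{i}<b_{i}$, resp.\ $c_{i}>b_{i}$), not the maximality content of Lemmas~\ref{lem:subword} and~\ref{lem:subword2}, whose role is to guarantee $A_{i}\succ\underline{A_{i}}$ and hence $W\succ\underline{W}$ for the later proof of Theorem~\ref{thm:main}; likewise the large exponent $p$ is needed only so that $\overline{W}\succ W$, while the leading segment produced by $d_{[i]}^{p}$ is already locked in by a single full twist.
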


\begin{proof}
 The above calculations are confirmed by a combination of the following sub-calculations, which are easily observed from Lemma \ref{lem:cutcalc}.
 
\begin{itemize}
\item If $\Gamma$ has the tight cutting sequence $(*,+(s+1),\ldots)$ where $*$ is a subsequence which contains no $\pm (s+1), \ldots, \pm (q+1)$, then the tight cutting sequence of $(\sigma_{q}\sigma_{q+1}\cdots \sigma_{s})(\Gamma)$ is $(*,+q,-(q+1),\ldots)$.

\item If $\Gamma$ has the tight cutting sequence $(*,-(s+1),\ldots)$ where $*$ is a subsequence which contains no $\pm (q+1),\ldots ,\pm (s+1)$, then the tight cutting sequence of $(\sigma_{q}\sigma_{q-1}\cdots \sigma_{s})(\Gamma)$ is $(*,-(q+2),+(q+1),\ldots)$.

\item If $\Gamma$ has the tight cutting sequence $(*,+q,-(q+1),\cdots)$, where $*$ is a subsequence which contains no $\pm (q+1), \pm (q+2)$, then the tight cutting sequence of $(\sigma_{q+1}\sigma_{q})\Gamma$ is 
$ (*, +q, -(q+3),\ldots )$.

\item If $\Gamma$ has the tight cutting sequence $(*,-(q+2),+(q+1),\ldots)$ where $*$ is a subsequence which contains no $\pm q, \pm (q+1)$, then the tight cutting sequence of $ (\sigma_{q-1}\sigma_{q})\Gamma$ is $(*,-(q+2),+(q-1),\ldots)$.

\item If $\Gamma$ has the tight cutting sequence $(*,+(q+1),-r,\ldots)$ where $r \neq -(q+2)$ and $*$ is a subsequence which contains no $\pm (q+1), \pm (q+2)$, then the tight cutting sequence of $ (\sigma_{q+1}\sigma_{q})\Gamma$ is $(*,+q,-(q+2),\ldots)$.

\item If $\Gamma$ has the tight cutting sequence $(*,-(q+1),r,\ldots)$ where $r \neq +q$ and $*$ is a subsequence which contains no $\pm q,\pm (q+1)$, then the tight cutting sequence of $ (\sigma_{q-1}\sigma_{q})\Gamma$ is $(*,-(q+2),+q,\ldots)$.
\end{itemize}
\end{proof}

\subsection{Proof of Theorem \ref{thm:main} and \ref{thm:ordertype}}
 
 Now we are ready to prove our main theorems.
 
\begin{proof}[Proof of Theorem \ref{thm:main}]
  
  First we remark that from the definition of the $\C$-normal forms and codes, we only need to prove the theorem for normal finite Thurston type orderings.

   We prove the theorem by induction of $n$. The case $n=2$ is trivial. Assume that the assertion is proved for all normal finite Thurston type orderings of $B_{i}$ for all $i<n$.
   
   Let $<$ be a normal finite Thurston type ordering on $B_{n}$ define by the normal curve diagram $\Gamma$. For positive $n$-braids $\alpha$ and $\beta$, assume that $\C(\alpha;<) \lo \C(\beta;<)$ holds. We denote the $\C$-normal form of $\beta$ by $W = A_{m}A_{m-1}\cdots A_{-1}$ and the $\C$-normal form of $\alpha$ by $V=C_{l}C_{l-1}\cdots C_{-1}$. We prove $\alpha < \beta$ by showing $\overline{V} < \underline{W}$.
   
   Since the ordering $<$ is left-invariant, we can always assume that the initial letters of $W(\alpha;<)$ and $W(\beta;<)$ are different, by annihilating the common prefixes.
Let us put
\[ \underline{W} = \underline{W(\beta;<)} = A_{m}\underline{A_{m-1}}\, \underline{A_{m-2}}\cdots \underline{A_{0}}A_{-1} \]
and
\[ \overline{V} = \overline{W(\alpha;<)} = C_{l}d_{[l-1]}^{p} d_{[l-2]}^{p} \cdots d_{[0]}^{p} C_{-1}. \]

Since $\C(\alpha;<) \lo \C(\beta;<)$, the inequality $l\leq m$ holds. If $l \neq m$, then by Proposition \ref{prop:cutcalc}, we conclude the arc
 $\underline{W}(\Gamma_{1})$ moves more left than $\overline{V}(\Gamma_{1})$, so $\alpha<\overline{V}<\underline{W}<\beta$. Therefore we only need to consider the case $m=l$. 

First we consider the case $m=0$ or $-1$.
Assume that $m=-1$, thus $\underline{W} = A_{-1}$ and $\overline{V}=C_{-1}$.
Let us denote the decompositions of $A_{-1}$ and $C_{-1}$ as $A_{-1}=X_{1}\cdots X_{n-2}$ and $ C_{-1}= Y_{1}\cdots Y_{n-2}$ respectively.
Since we have already assumed that the initial letters of $W(\alpha)$ and $W(\beta)$ are different, so 
our assumption $C_{-1} \lo A_{-1}$ implies $X_{j}=Y_{j}=\varepsilon$ for $j=1,\cdots,i-1$ and
$\C_{-i}  \ro \C_{-i}'$.

Let us write the decomposition of $X_{i}$ as 
\[ X_{i}= P_{m} \cdots P_{0}P_{-1}.\]
The assumption that $A_{-1}$ is the $\C$-normal form implies $P_{-1}$ must be the empty word.
Because otherwise,
 by regarding $P_{-1}X_{i-1}$ as a new $X_{i-1}$, we obtain a new subword decomposition which defines the bigger code.
Similarly, for the decomposition $Y_{i}=Q_{m}\cdots Q_{0}Q_{-1}$ of $Y_{i}$, the last subword $Q_{-1}$ is the empty word.

Now, the fact both $P_{-1}$ and $Q_{-1}$ are empty words implies that both the braids $A_{-1}$ and $C_{-1}$ move the arc $\Gamma_{i}$. Therefore the comparison of $A_{-1}$ and $C_{-1}$ can be done by seeing the image of $\Gamma_{i}$.
Recall that we have defined the code of $X_{i}$ and $Y_{i}$ so that they coincide with the code with respect to the restriction of the ordering to $\langle \sigma_{m_{i}},\sigma_{m_{i}+1},\ldots,\sigma_{M_{i}}\rangle$.
Now by inductive hypothesis, we conclude that $X_{i}(\Gamma_{i})$ moves left side of$Y_{i}(\Gamma_{i})$, hence $\alpha < \beta$.
The case $m=0$ is similar.

 Next we consider the case $m>0$. We only show $m$ is odd case. The even $m$ case is similar.
 From Proposition \ref{prop:cutcalc}, the tight cutting sequence presentation of the arc 
\[ [\sigma_{1}^{a_{m}}\underline{A_{m-1}}\, \underline{A_{m-2}}\cdots \underline{A_{0}}A_{-1}](\Gamma_{1}) \] 
 have the initial segment
\[( \underbrace{+1,+1,\ldots, +1}_{(m-1) \slash 2} ,\underbrace{+1,-2,+3}_{(*1)} ).\]
if $a_{m}=1$ and
\[( \underbrace{+1,+1,\ldots, +1}_{(m-1) \slash 2} ,\underbrace{+1,-3,+2}_{(*2)} ).\]
if $a_{m}=2$.

 From the definition of $\C$-normal form, both $A_{m}$ and $C_{m}$ are also the $\C$-normal form with respect to the Dehornoy ordering of $B_{n-1}$. 
 
Let $\Pi$ be the normal curve diagram in Example \ref{exam:Dtype}, which defines the Dehornoy ordering. Since $\C(V;<) \lo \C(W;<)$, the inequality $\C(A_{m};<_{D}) \lo \C(C_{m};<_{D})$ holds. Therefore by the inductive hypothesis, the arc $A_{m}(\Pi_{1})$ moves left side of $C_{m}(\Pi_{1})$. In particular, $\underline{A_{m}}(\Pi_{1})$ moves more left than $\overline{C_{m}}(\Pi_{1})$.

Now let us observe the tight cutting sequence presentation of the arc $\sigma_{1}(\Pi_{1})$ is $(+1,-2,+3)$, which is identical with the subsequence $(*1)$, and the tight cutting sequence of $\sigma_{1}^{2}(\Pi_{1})$ is $(+1,-3,+2)$, which is identical with the subsequence $(*2)$ .

 Thus, we can apply the results in the Proposition \ref{prop:cutcalc} for the actions of $\underline{W'}$ and $\overline{V'}$ for the subsequence $(*)$.
 
Therefore, the arc  
\[ \underline{A_{m}}\sigma_{1}^{-a_{m}} (\sigma_{1}^{a_{m}}\underline{A_{m-1}} \,\underline{A_{m-2}}\cdots \underline{A_{0}}A_{-1}) (\Gamma_{1}) \]
has the initial segment
\[
 \underline{A_{m}} \sigma_{1}^{-a_{m}}(\underbrace {+1,+1,\ldots, +1}_{(m \slash 2)-1},\underbrace{+1,-2,+3}_{(*)} ) = (\underbrace {+1,+1,\cdots}_{(m \slash 2)-1}, \underline{A_{m}}( \Pi_{1} ) ) 
\]
 if $a_{m}=1$ and
\[ 
 \underline{A_{m}} \sigma_{1}^{-a_{m}}(\underbrace {+1,+1,\ldots, +1}_{(m \slash 2)-1},\underbrace{+1,-3,+2}_{(*)} )  = (\underbrace {+1,+1,\ldots}_{(m \slash 2)-1}, \underline{A_{m}}( \Pi_{1} ) ) 
\]
if $a_{m}=2$.

Thus in either case, the initial segment of the tight cutting sequence of the arc 
\[ \underline{A_{m}}\sigma_{1}^{-a_{m}} (\sigma_{1}^{a_{m}}\underline{A_{m-1}} \,\underline{A_{m-2}}\cdots \underline{A_{0}}A_{-1}) (\Gamma_{1}) \]
 is given by 
 \[ (\underbrace {+1,+1,\ldots,+1}_{(m \slash 2)-1}, \underline{A_{m}}( \Pi_{1} ) )\] 
where $\underline{A_{m}}(\Pi_{1})$ is an initial segment of the tight cutting sequence presentation of arc $\underline{A_{m}}(\Pi_{1})$. 

Similarly, the tight cutting sequence of the arc  
\[ \overline{C_{m}} \sigma_{1}^{-1} (\sigma_{1}d_{[m-1]}^{p} d_{[m-2]}^{p} \cdots d_{[0]}^{p} A_{-1})(\Gamma_{1})
\]
has the initial segment 
\[ (\underbrace {+1,+1,\ldots.+1}_{(m \slash 2)-1}, \overline{C_{m}}( \Pi_{1} ) ) \] 
where $\overline{C_{m}}(\Pi_{1})$ is a initial segment of the tight cutting sequence presentation of arc $\overline{C_{m}}(\Pi_{1})$.

By inductive hypothesis, the arc $\underline{A_{m}}(\Pi_{1})$ left side of the arc $\overline{C_{m}}(\Pi_{1})$, so we conclude that
\[\alpha < \overline{V} < \overline{C_{m}}d_{[m-1]}^{p}\cdots < \underline{A_{m}}\,\underline{A_{m-1}}\cdots < \underline{W}< \beta . \] 

\end{proof}

 Next we proceed to a computation of the order-type of the well-ordered set $(B_{n}^{+},<)$ for arbitrary finite Thurston type orderings.
First we observe the following lemma.
 
\begin{lem}
\label{lem:conjugate}
Let $<$, $<'$ be conjugate left-invariant total orderings of $B_{n}$, having the Property $S$. Then the order type of $(B_{n}^{+},<)$ and $(B_{n}^{+},<')$ are the same.
\end{lem}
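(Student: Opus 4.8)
The plan is to use the fact that conjugation realizes an order isomorphism of the whole group, and then to compress this isomorphism down to the positive monoid using positivity of the conjugating element together with the elementary fact that any subset of a well-ordered set has order type at most that of the ambient set.

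First I would invoke Lemma \ref{lem:conjpos} to choose a \emph{positive} conjugating element $\alpha \in B_{n}^{+}$ between $<$ and $<'$, so that $f < h$ if and only if $f\alpha <' h\alpha$ for all $f,h \in B_{n}$. Right multiplication $\phi \colon B_{n} \to B_{n}$, $\phi(f) = f\alpha$, is then an order isomorphism from $(B_{n},<)$ onto $(B_{n},<')$: it is injective because it is right multiplication by a fixed element, and the displayed equivalence is precisely the assertion that it preserves the orderings. Since $\alpha$ is positive, $\phi(B_{n}^{+}) = B_{n}^{+}\alpha \subseteq B_{n}^{+}$, so the restriction $\phi|_{B_{n}^{+}}$ is an order isomorphism of $(B_{n}^{+},<)$ onto the sub-poset $(B_{n}^{+}\alpha,<')$ of $(B_{n}^{+},<')$.

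Next I would record that both $(B_{n}^{+},<)$ and $(B_{n}^{+},<')$ are well-ordered; this is exactly the Corollary to Property $S$, which applies since both $<$ and $<'$ have Property $S$. Invoking the standard fact that a subset of a well-ordered set, with the induced order, has order type at most that of the whole set, and combining it with the isomorphism of the previous step, one obtains
\[
\textrm{ot}(B_{n}^{+},<) \;=\; \textrm{ot}(B_{n}^{+}\alpha,<') \;\leq\; \textrm{ot}(B_{n}^{+},<').
\]

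Finally, to obtain the reverse inequality I would run the same argument with the roles of $<$ and $<'$ interchanged, yielding $\textrm{ot}(B_{n}^{+},<') \leq \textrm{ot}(B_{n}^{+},<)$, so that the two inequalities force equality of ordinals. The one point requiring genuine care — and the reason Lemma \ref{lem:conjpos} must be applied \emph{twice} rather than once — is that the conjugating element going the other way is $\alpha^{-1}$, which is typically \emph{not} positive; hence $\phi^{-1}$ need not map $B_{n}^{+}$ into $B_{n}^{+}$, and the compression to $B_{n}^{+}$ genuinely requires a fresh positive conjugator $\beta \in B_{n}^{+}$ between $<'$ and $<$ supplied by Lemma \ref{lem:conjpos}. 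Apart from this symmetry subtlety the argument is routine.
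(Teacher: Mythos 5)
Your proof is correct and takes essentially the same route as the paper: choose a positive conjugating element via Lemma \ref{lem:conjpos}, note that right multiplication by it is an order-preserving injection of one copy of $(B_{n}^{+})$ into the other (both well-ordered thanks to Property $S$), deduce one ordinal inequality, and conclude by symmetry. Your explicit caveat that the reverse inequality needs a \emph{fresh} positive conjugator (since $\alpha^{-1}$ is not positive) is precisely what the paper's phrase ``interchanging the role of $<$ and $<'$'' carries out implicitly.
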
 

\begin{proof}  
First we remark that the assumption that the ordering has the property $S$ implies its restriction to $B_{n}^{+}$ defines a well-ordering.
Let $\alpha$ be a conjugating element between $<$ and $<'$. From Lemma \ref{lem:conjpos}, we can choose $\alpha$ as a positive braid. From Property $S$, the map $\Phi:(B_{n}^{+},<') \rightarrow (B_{n}^{+},<)$ defined by $\Phi(\beta)=\beta\alpha$ is an order-preserving injection, so the order-type of $(B_{n}^{+}, <')$ is smaller than that of $(B_{n}^{+}, <)$. By interchanging the role of $<$ and $<'$, we also obtain that the order type of $(B_{n}^{+}, <)$ is smaller than that of $(B_{n}^{+}, <')$. Therefore they have the same order-type.
\end{proof} 

Thus, it is sufficient to compute the order-type of $(B_{n}^{+},<)$ for a normal finite Thurston type ordering $<$ to determine the order-type of $(B_{n}^{+},<')$ for general normal finite Thurston type ordering $<'$.

\begin{proof}[Proof of Theorem \ref{thm:ordertype}]

We prove the theorem by induction on $n$. The case $n=2$ is trivial. Let $<$ be a normal finite Thurston type ordering on $B_{n}$ and let $\textsf{Ncode}(n,<)$ be the set of all codes of the $\C$-normal forms with respect to the ordering $<$. 

Theorem \ref{thm:main} asserts that $(\textsf{Ncode}(n,<),\lo)$ is order-isomorphic to $(B_{n}^{+},<)$. A direct computation shows that the order-type of $(\Code(n,<),\lo)$ is 
$\omega^{\omega^{n-2}}$. Since $(\textsf{Ncode}(n,<),\lo)$ is a subset of $(\Code(n,<),\lo)$, the order type of $(\textsf{Ncode}(n,<),\lo )$ is at most $\omega^{\omega^{n-2}}$.

To show the converse inequality, we consider the set of even codes  
\[
\textsf{Code}_{\textsf{even}}(n,<) = 
\left\{
\C \in \Code(n,<) 
\left|
\begin{array}{l} 
\C \textrm{ contains only }\\
\textrm{positive even integers  } \\
\textrm{when neglecting trivial zeros} 
\end{array}
\right.
\right\}.
\]
  That is, a code $\C$ belongs to the set $\textsf{Code}_{\textsf{even}}(n,<)$ if and only if each entry of $\C$ is strictly positive even integer except trivial zeros.
  By inductive hypothesis, a direct computation shows that the order type of $(\textsf{Code}_{\textsf{even}},\lo)$ is $\omega^{\omega^{n-2}}$. 
On the other hand, Tits conjecture, proved in \cite{cp}, asserts the subgroup of $B_{n}$ generated by the square of generators $\{\sigma_{1}^{2},\sigma_{2}^{2},\ldots,\sigma_{n-1}^{2}\}$ has the only trivial commutative relations $\sigma_{i}^{2}\sigma_{j}^{2} = \sigma_{j}^{2}\sigma_{i}^{2} \;(|i-j| >1)$. This implies the set of even codes $\textsf{Code}_{\textsf{even}}$ is a subset of $\textsf{NCode}(n,<)$.
So the order type of $(\textsf{Ncode}(n,<),\lo)$ is at least $\omega^{\omega^{n-2}}$.
Therefore the order type of $(\textsf{Ncode}(n,<),\lo)$ is $\omega^{\omega^{n-2}}$.
\end{proof}

\section{Relationships between the $\C$-normal forms and Dehornoy's $\Phi$-normal form}

   In this section we study the relationships between the $\C$-normal form and Dehornoy's $\Phi$-normal form (alternate normal form) or Burckel's normal form. This gives an algebraic description, the efficient method of computations, and the computational complexities of the $\C$-normal forms.
 
\subsection{$\Phi$-normal form of braid groups}

   The $\Phi$-normal form is a normal form of positive braids defined by Dehornoy in \cite{d2}.
   First we review the definition of the $\Phi$-normal form of the positive braid monoid $B_{n}^{+}$. The construction of such a normal form is valid for much wider class of monoids, called a locally Garside monoid. Since we are interested in the braid groups, we only describes the positive braid monoid case.
   
As is well known, the positive braid monoid $B_{n}^{+}$ defines a Garside structure of $B_{n}$. This implies the positive braid monoid $B_{n}^{+}$ has many good properties. Here we do not describe what a Garside structure is. Only we need is existence and uniqueness of the maximal right divisor.

For a subset $I$ of $\{1,2,\ldots, n-1\}$, let $B_{I}^{+}$ be the submonoid of $B_{n}^{+}$ generated by $\{\sigma_{i}\: | \: i \in I\}$. For each $\beta \in B_{n}^{+}$, there exists the unique maximal right divisor of $\beta$ which belongs to $B_{I}^{+}$. That is, there exists the unique element $\beta' \in B_{I}$ which satisfies the following two properties:
\begin{enumerate}
\item $ \beta \beta'^{-1}\in B_{n}^{+}$.
\item For $\beta'' \in B_{I}^{+}$, if $\beta \beta''^{-1} \in B_{n}^{+}$ then $\beta'\beta''^{-1} \in B_{I}^{+}$.
\end{enumerate} 

  We denote the maximal right divisor of $\beta$ which belongs to $B_{I}^{+}$ by $\beta \wedge B_{I}^{+}$. 

Let $I= \{2,3,\ldots,n-1\}$ and $J = \{1,2,\ldots,n-2\}$.
For a positive $n$-braid $\beta$, the $(B_{I}^{+},B_{J}^{+})$-decomposition (the alternate decomposition) of $\beta$ is a factorization of $\beta$ given by 
\[ \beta = \beta_{m}\beta_{m-1}\cdots \beta_{0}, \]
where $\beta_{i}$ is defined by the inductive formula
\[
\left\{
\begin{array}{lll}
\beta_{0} & = & \beta \wedge B_{I}^{+} \\
\beta_{i} & = & (\beta\cdot \beta_{0}^{-1}\beta_{1}^{-1}\cdots \beta_{i-1}^{-1}) \wedge B_{J}^{+} \;\;\; i:odd \\
         & = & (\beta\cdot \beta_{0}^{-1}\beta_{1}^{-1}\cdots \beta_{i-1}^{-1}) \wedge B_{I}^{+} \;\;\; i:even.
\end{array}
\right.
\]

 Since the word length of $\beta$ is finite, the above factorization must stop in finite step.
 
 For even $i>0$, we identify $B_{I}^{+}$ with $B_{n-1}^{+}$ by the flip map $D$. Similarly, we identify $B_{J}$ with $B_{n-1}^{+}$ by the identity map for odd $i>0$. For $i=0$, we use another identification. We identify $B_{I}^{+}$ with $B_{n-1}^{+}$ by the shift map $Sh$. Then by using these identifications, we can perform the above alternate decompositions for each $\beta_{i}$.
By iterating the alternate decomposition until each $B_{I}^{+}$ is generated by only one generator, we finally obtain the unique positive word representative of $\beta$. We call this unique positive word representative the $\Phi$-{\it normal form} (alternate normal form).

\begin{exam}
Let $\beta = \sigma_{1}\sigma_{3}\sigma_{2}\sigma_{3}\sigma_{2}^{2}\sigma_{1}^{2}\sigma_{3}$ be a positive 4-braid.
The alternate decomposition of $\beta$ is given by
 \[ \beta = \beta_{2}\beta_{1}\beta_{0}=(\sigma_{3}^{3})(\sigma_{2}^{2}\sigma_{1})(\sigma_{2}\sigma_{3}^{2}). \]
 
 Now we iterate the alternate decomposition for each $\beta_{i}$.
 Using an identification $D(\beta_{2})=D(\sigma_{3}^{3}) = \sigma_{1}^{3} \in B_{3}^{+}$, the alternate decomposition of $\beta_{2}$ is given by $\beta_{2}=(\sigma_{3}^{3})$.
 Similarly, the alternate decomposition of $\beta_{1}=\sigma_{2}^{2}\sigma_{1}$ is given by $\beta_{1}= (\sigma_{2}^{2})(\sigma_{1})$. Finally, by using an identification $Sh(\beta_{0})=Sh(\sigma_{2}\sigma_{3}^{2})=\sigma_{1}\sigma_{2}^{2}$, the alternate decomposition of $\beta_{0}$ is given by $\beta_{0} = (\sigma_{2})(\sigma_{3}^{2})$.
 Summarizing, we obtain the $\Phi$-normal form of $\beta$,
 \[ \beta = ((\sigma_{3}^{3}))((\sigma_{2}^{2})(\sigma_{1}))((\sigma_{2})(\sigma_{3}^{2})) = \sigma_{3}^{3}\sigma_{2}^{2}\sigma_{1}\sigma_{2}\sigma_{3}^{2}. \]
 
Notice that this $\Phi$-normal form coincide with the $\C$-normal form given in Example \ref{exam:dehornoy4}. 
\end{exam}

  As this example suggests the $\C$-normal form and Dehornoy's $\Phi$-normal form coincide. 
  
\begin{prop}
\label{prop:m-normal}
   Let $<_{D}$ be the Dehornoy ordering of the braid group $B_{n}$.
Then for each element $\beta$ in $B_{n}^{+}$, the $\C$-normal form $W(\beta;<_{D})$ of $\beta$ with respect to the ordering $<_{D}$ coincide with Dehornoy's $\Phi$-normal form. 
\end{prop}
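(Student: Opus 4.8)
The plan is to argue by induction on $n$, showing that the greedy ``maximal right divisor'' extraction defining the $\Phi$-normal form is exactly what $\ro$-maximization of the code performs; the base case $n=2$ is immediate since both forms are $\sigma_1^a$. The starting observation is that for the Dehornoy ordering the permutation is trivial, so $k=k(1)=1$; hence $A_0$ is neglected and the subword decomposition $W=A_m\cdots A_1 A_{-1}$ of a $\C$-normal form has precisely the alternating support pattern of the alternate decomposition, namely $A_{-1}\in\langle\sigma_2,\dots,\sigma_{n-1}\rangle=B_I^{+}$, $A_i\in B_J^{+}$ for odd $i$, and $A_i\in B_I^{+}$ for even $i>0$. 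Matching indices by $A_{-1}\leftrightarrow\beta_0$ and $A_i\leftrightarrow\beta_i$ for $i\geq 1$, it therefore suffices to prove that these two alternating factorizations coincide factor by factor: once the top-level factors agree, the within-factor recursions agree by the inductive hypothesis on $n$, since $A_i$ carries the code $\C(D^{i-1}(A_i);<_{D})$, which by induction is the $\Phi$-normal form of $D^{i-1}(A_i)\in B_{n-1}^{+}$; because $D^2=\mathrm{id}$, the identifications $D^{i-1}$ used by the code match the flip/shift identifications ($D$ for even $i$, identity for odd $i$, $Sh$ for $\beta_0$) used in the alternate decomposition.

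The core step is to identify $A_{-1}$ with the maximal right divisor $\beta\wedge B_I^{+}$. The key point is that $\ro$ is lexicographic \emph{from the right}, so $\ro$-maximality first makes the rightmost block of the code --- the code of $A_{-1}$, carried by its decomposition $X_1\cdots X_{n-1}$ --- as large as possible, then the block of $A_1$, then $A_2$, and so on. This greedy-from-the-right order is exactly the order in which the alternate decomposition extracts $\beta_0,\beta_1,\beta_2,\dots$. I would first note that $\beta_0$ is realizable as a rightmost factor, since $\beta=(\beta\beta_0^{-1})\beta_0$ with $\beta\beta_0^{-1}\in B_n^{+}$ and $\beta_0\in B_I^{+}$. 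Conversely, if $A_{-1}\in B_I^{+}$ is the rightmost factor of any representative, then $\beta A_{-1}^{-1}\in B_n^{+}$, so property~(2) of the maximal right divisor gives $\beta_0 A_{-1}^{-1}\in B_I^{+}$; writing $\beta_0=C\,A_{-1}$ with $C\in B_I^{+}$ yields $\beta_0\succ A_{-1}$. Since the restriction of $<_{D}$ to $B_I^{+}$ is again a Dehornoy ordering, the inductive hypothesis lets me regard $A_{-1},\beta_0$ as elements of $B_{n-1}^{+}$ and compare their codes there, where $\beta_0\succ A_{-1}$ gives $\C(A_{-1})\ro\C(\beta_0)$. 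Thus the rightmost block is maximized precisely by $A_{-1}=\beta_0$, and applying the same argument after removing $\beta_0$ (with the roles of $I$ and $J$ swapped via $D$) forces $A_i=\beta_i$ for all $i$.

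The main obstacle is the monotonicity statement just invoked: that enlarging a factor in the Garside right-divisibility order never decreases its $\ro$-maximal code, i.e.\ that within each parabolic submonoid the combinatorial order $\ro$ on codes is compatible with right-divisibility. This is delicate because the $\C$-normal form is defined through $\ro$, whereas Theorem~\ref{thm:main} relates the group ordering $<$ to $\lo$ rather than $\ro$; the compatibility therefore cannot be read off directly from Property~$S$. I would isolate it as a separate lemma, proving by induction on $n$ that if $U=CV$ in $B_{n-1}^{+}$ (so $U\succ V$) then $\C(V)\ro\C(U)$, by inserting the generators of $C$ into the $\C$-normal form of $V$ and tracking the effect on each code block scanning from the right. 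A secondary technical point, also handled by the induction on $n$ together with the fact that restrictions of $<_{D}$ to standard parabolic submonoids are again of Dehornoy type, is that the block carried by $A_{-1}$ inside the $B_n$-code genuinely coincides with the $\C$-normal-form code of $A_{-1}$ viewed in $B_{n-1}^{+}$. Once these are in hand, the greedy identification $A_i=\beta_i$ goes through and the two normal forms coincide.
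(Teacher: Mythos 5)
Your proposal is correct and is essentially the paper's own argument: the paper likewise notes that $A_{0}$ is empty for the Dehornoy ordering, identifies $A_{-1}$ with $\beta_{0}=\beta\wedge B_{I}^{+}$ by writing $\beta_{0}=FA_{-1}$ with $F\in B_{I}^{+}$ and observing that a nontrivial $F$ would give a decomposition $\cdots|FA_{-1}$ with strictly $\ro$-bigger code, and then iterates the same reasoning to get $A_{i}=\beta_{i}$ and recurses inside each factor. The only difference is one of emphasis: the compatibility of codes with right-divisibility, which you isolate as a separate lemma to be proved by induction, is precisely the step the paper leaves implicit in the phrase ``defines a strictly bigger code.''
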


\begin{proof}
Let $\beta$ in $B_{n}^{+}$ and $W=A_{m}A_{m-1}\cdots A_{-1}$ be the $\C$-normal form of $\beta$. Let us denote the alternate decomposition of $\beta$ by $\beta=\beta_{k}\beta_{k-1}\cdots\beta_{0}$.
 From the definition of the $\C$-normal form, $A_{0}$ is always an empty word. Since $A_{-1} \in \langle \sigma_{2},\sigma_{3},\ldots,\sigma_{n-1} \rangle$, we obtain $\beta_{0}=FA_{-1}$ for some $F \in \langle \sigma_{2},\sigma_{3},\ldots,\sigma_{n-1} \rangle$. If $F$ is not a trivial braid, then the subword decomposition $W= \cdots | FA_{-1}$ defines a strictly bigger code, so it leads a contradiction. So we conclude
\[ A_{-1}=\beta_{0} =  W \wedge \langle \sigma_{2},\sigma_{3},\ldots,\sigma_{n-1} \rangle. \] 
By the similar argument, we obtain $\beta_{i} = A_{i}$ for $i>0$. An iteration of this argument for each $A_{i}$ shows that the $\C$-normal form and the $\Phi$-normal form coincide.
\end{proof}

\begin{rem}
  The code defined in this paper corresponds to {\it the associated exponent sequence} in Dehornoy's paper \cite{d2}.
\end{rem}

Since Dehornoy shows Dehornoy's $\Phi$-normal form coincide with Burckel's normal form \cite{d2}, we obtain an alternative description of Burckel's normal form.

\begin{cor}
 The $\C$-normal form with respect to the Dehornoy ordering is identical with Burckel's normal form. 
\end{cor}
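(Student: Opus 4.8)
The plan is to derive the corollary by transitivity, composing the two normal-form identifications that are now in hand. The first and substantive step is already carried out in Proposition \ref{prop:m-normal}: for the Dehornoy ordering $<_{D}$, the $\C$-normal form $W(\beta;<_{D})$ of every positive braid $\beta \in B_{n}^{+}$ is, as a positive word, exactly Dehornoy's $\Phi$-normal form. The second step is to invoke Dehornoy's result in \cite{d2}, recalled in the sentence preceding the statement, that the $\Phi$-normal form of a positive braid coincides with Burckel's normal form. Chaining these two equalities gives, for every $\beta \in B_{n}^{+}$, that the $\C$-normal form with respect to $<_{D}$ and Burckel's normal form are the same positive word, which is precisely the assertion.

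Since both inputs are already established, there is essentially no new computation to perform; the only point deserving care is the bookkeeping of conventions. All three objects, the $\C$-normal form, the $\Phi$-normal form, and Burckel's normal form, must be compared as positive word representatives of the same positive braid, so I would first note that the tower of subword decompositions carried by the $\C$-normal form is forgotten in this comparison and only the underlying word is retained. One should also confirm that the ordering conventions match: Dehornoy and Burckel frequently phrase their normal forms in terms of the reverse Dehornoy ordering $<_{D'}$ rather than $<_{D}$ (cf. Example \ref{exam:Dtype}), so I would verify that the identifications $D$ and $Sh$ used to set up the alternate decomposition in Section 5 are the same ones implicit in Proposition \ref{prop:m-normal}, ensuring that the two occurrences of ``$\Phi$-normal form'' refer to the same word.

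Consequently the main difficulty, namely proving that the $\C$-normal form agrees with the $\Phi$-normal form, has been localized entirely in Proposition \ref{prop:m-normal}; relative to that proposition, the corollary is a one-line consequence. I therefore expect no genuine obstacle in the corollary itself beyond the convention check just described, and the proof should amount simply to citing Proposition \ref{prop:m-normal} together with \cite{d2}.
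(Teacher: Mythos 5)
Your proposal is correct and follows exactly the paper's route: the corollary is obtained by chaining Proposition \ref{prop:m-normal} (the $\C$-normal form for $<_{D}$ equals the $\Phi$-normal form) with Dehornoy's result in \cite{d2} that the $\Phi$-normal form coincides with Burckel's normal form. Your extra remarks on convention-checking are sensible but not needed beyond what the paper already does.
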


\subsection{Tail-twisted $\Phi$-normal form and $\C$-normal form}

   Now we generalize Proposition \ref{prop:m-normal} for general normal finite Thurston type orderings.
To describe the $\C$-normal form for general normal finite Thurston type orderings in view of Dehornor's $\Phi$-normal form, we introduce a generalized notion of the $\Phi$-normal form, the {\it Tail-twisted $\Phi$-normal form}.

Let $\bk=\{k(1),k(2),\ldots,k(n-1)\}$ be a permutation of $n-1$ integers $\{1,2,\ldots,n-1\}$.
For $\beta \in B_{n}^{+}$, the $\bk$-tail twisted $\Phi$-normal form is a positive word representative of $\beta$ defined by the following inductive procedure. 

As in the definition of codes, we begin with the case $n=2$. 
In this case, we simply define the $\bk$-tail normal form of a positive braid $\sigma_{1}^{p}$ by the word $\sigma_{1}^{p}$.

Assume that we have already constructed the $\bk$-tail twisted normal forms for all $B_{i}^{+}$ and for all permutations in $S_{i}$ $(i<n)$. Here $S_{i}$ is the permutation group of degree $i$. 
Then we define the $\bk$-tail twisted normal form of positive $n$-braids $\beta$ for $\bk \in S_{n}$ as follows.  

First we define the permutations $\bk_{i}$ for $i=0,1,\cdots,n-2$. 
Let $\bk_{0}$ the permutation of $n-2$ integers defined by
\[ \bk_{0} = \{k(j_{1})-1,k(j_{2})-1,\cdots, k(j_{n-2})-1 \: |\: j_{i}< j_{i+1},\; k(j_{i}) > 1\}. \]

Let $m_{j}$ and $M_{j}$ be integers and $I_{j}$ be the set of integers appeared in the definition of the code (See section 3.1 again).
Let $\bk_{j}$ be the permutation of $(M_{j}-m_{j})$ integers, defined by 
\[ \bk_{j} =   \{ k(i_{1}),k(i_{2}),\ldots, k(i_{M_{j}-m_{j}}) \: |\: j<i_{q}< i_{q+1},\; m_{j} \leq k(i_{q}) \leq M_{j}\}. \]

Next we consider a twisted version of the alternate decomposition.
For $\beta \in B_{n}^{+}$, let us define
\[
\left\{
\begin{array}{l}
\beta_{\textsf{Tail}} = \beta \wedge \langle \sigma_{1},\sigma_{2},\ldots,\sigma_{k(1)-1},\sigma_{k(1)+1},\ldots,\sigma_{n-1} \rangle \\
\beta_{T_{0}}= (\beta \beta_{\textsf{Tail}}^{-1}) \wedge \langle \sigma_{2},\sigma_{3},\ldots,\sigma_{n-1} \rangle \\
\beta_{\textsf{Main}}= \beta \beta_{T_{0}}^{-1}\beta_{\textsf{Tail}}^{-1}
\end{array}
\right.
\]
We decompose $\beta_{\textsf{Tail}}$ by $\beta_{\textsf{Tail}} = \beta_{T_{1}}\beta_{T_{2}}\cdots\beta_{T_{n-2}}$ as the following way.
\[
\left\{
\begin{array}{l}
\beta_{T_{n-2}} =  \beta_{\textsf{Tail}} \wedge B_{I_{n-2}}^{+} \\
\beta_{T_{n-j}}  =  (\beta_{\textsf{Tail}}\cdot \beta_{T_{n-2}}^{-1}\beta_{T_{n-3}}^{-1}\cdots \beta_{T_{n-j+1}}^{-1}) \wedge B_{I_{n-j}}^{+}
\end{array}
\right.
\]

We denote the $\Phi$-normal form of the braid $\beta_{\textsf{Main}}$ by $N(\beta_{\textsf{Main}})$.
By some powers of the shift map, each $B_{I_{j}}^{+}$ is naturally identified with $B_{M_{j}-m_{j}+1}^{+}$. Using this identification, we denote by $\widetilde{N}(\beta_{T_{j}};\bk_{j})$ the $\bk_{j}$-tail twisted $\Phi$-normal form of $\beta_{T_{j}}$.
 By the inductive assumption, we have already defined these tail-twisted $\Phi$-normal forms. 
 Now, the {\it $\bk$-tail twisted $\Phi$-normal form} of a positive $n$-braid $\beta$ is a positive word representative $\widetilde{N}(\beta;\bk)$ defined by 
\[
\widetilde{N} (\beta;\bk) = N(\beta_{\textsf{Main}})\widetilde{N}(\beta_{T_{0}};\bk_{0})\cdots\widetilde{N}(\beta_{T_{n-2}};\bk_{n-2}).
\]

 We remark that for the trivial permutation $\bk=\{1,2,\ldots,n-1\}$, the $\bk$-tail twisted $\Phi$-normal form is nothing but the usual Dehornoy's $\Phi$-normal form.
  Using the tail-twisted $\Phi$-normal form, Proposition \ref{prop:m-normal} is generalized as follows.

\begin{prop}
Let $<$ be a normal finite Thurston type ordering of $B_{n}$ which corresponds to the permutation $\bk = \{k(1),k(2),\ldots,k(n-1)\}$.
Then for each $\beta \in B_{n}^{+}$, the $\bk$-tail twisted $\Phi$-normal form $\widetilde{N}(\beta;\bk)$ coincide with the $\C$-normal form $W(\beta;<)$ .
\end{prop}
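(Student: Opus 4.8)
The plan is to prove the proposition by induction on $n$, mirroring the recursive definitions of the two normal forms and reusing the argument of Proposition \ref{prop:m-normal}. The base case $n=2$ is immediate, since both forms of $\sigma_1^{p}$ are $\sigma_1^{p}$. For the inductive step I fix $\beta\in B_n^+$, write its $\C$-normal form as $W=A_m A_{m-1}\cdots A_0 A_{-1}$, and compare the coarse three-fold structure of $W$, namely the main block $A_m\cdots A_1$, the middle block $A_0$, and the tail $A_{-1}$, with the three pieces $\beta_{\textsf{Main}}$, $\beta_{T_0}$, $\beta_{\textsf{Tail}}$ of the twisted alternate decomposition, where $\beta=\beta_{\textsf{Main}}\beta_{T_0}\beta_{\textsf{Tail}}$.

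The engine, exactly as in Proposition \ref{prop:m-normal}, is that $\ro$-maximality of the code is equivalent to greedy extraction of maximal right divisors, read from the right. First I would show $A_{-1}=\beta_{\textsf{Tail}}$: the entries of the code coming from $A_{-1}$ are the most significant for $\ro$, and $A_{-1}$ is confined to $\langle\sigma_1,\ldots,\sigma_{k-1},\sigma_{k+1},\ldots,\sigma_{n-1}\rangle$; were some positive letter of $\beta A_{-1}^{-1}$ slidable into this submonoid, the code would strictly increase, contradicting maximality. Hence $A_{-1}=\beta\wedge\langle\sigma_1,\ldots,\sigma_{k-1},\sigma_{k+1},\ldots,\sigma_{n-1}\rangle=\beta_{\textsf{Tail}}$. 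Applying the same argument to $\beta A_{-1}^{-1}$ and the submonoid $\langle\sigma_2,\ldots,\sigma_{n-1}\rangle$ gives $A_0=\beta_{T_0}$, and the remaining block is $A_m\cdots A_1=\beta_{\textsf{Main}}$.

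It then remains to match each block after one step of recursion. Since $\beta_{\textsf{Main}}$ is the complement of the maximal right divisor in $\langle\sigma_2,\ldots,\sigma_{n-1}\rangle$, it has no nontrivial right divisor there, so its ordinary alternate decomposition begins with an empty factor and its remaining factors are precisely $A_1,\ldots,A_m$; because each $A_i$ is normalized with respect to the Dehornoy ordering of $B_{n-1}$ (via the flip $D^{i-1}$), Proposition \ref{prop:m-normal} identifies $A_m\cdots A_1$ with the $\Phi$-normal form $N(\beta_{\textsf{Main}})$. For the middle block, $A_0$ lives in $\langle\sigma_2,\ldots,\sigma_{n-1}\rangle\cong B_{n-1}^+$ and is normalized with respect to the restricted ordering $<_{\textsf{res}}$, which is the normal ordering attached to $\bk_0$; the inductive hypothesis at level $n-1$ gives $A_0=\widetilde{N}(\beta_{T_0};\bk_0)$. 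Finally I peel the tail off from the right: the maximal right divisor extraction defining $\beta_{T_{n-2}},\beta_{T_{n-3}},\ldots$ in the submonoids $B_{I_j}^+$ coincides with the subword decomposition $A_{-1}=X_1\cdots X_{n-2}$ forced by $\ro$-maximality, and since each $X_j$ is normalized with respect to the restricted ordering $<_j$, which is the normal ordering attached to $\bk_j$ on a smaller braid group, the inductive hypothesis yields $X_j=\widetilde{N}(\beta_{T_j};\bk_j)$. Assembling the three identifications gives $W=\widetilde{N}(\beta;\bk)$.

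The step I expect to be the main obstacle is the top-level bookkeeping of the maximal right divisor identifications. The relevant submonoids $\langle\sigma_i:i\neq k\rangle$, $\langle\sigma_2,\ldots,\sigma_{n-1}\rangle$ and the $B_{I_j}^+$ share generators, so I must verify that absorbing a letter into a more significant (rightmost) factor can never be compensated by a loss in a later factor; that is, that the greedy, factor-by-factor extraction in the prescribed order genuinely realizes the $\ro$-lexicographic maximum. This is precisely where the commutation and braid relations exploited in Lemmas \ref{lem:observation0}--\ref{lem:subword2} re-enter, now recast in the maximal right divisor language of the Garside structure. Once each factor is pinned down, the recursive matching is routine.
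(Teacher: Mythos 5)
Your proposal is correct and follows essentially the same route as the paper: the paper's proof likewise reduces to the block-by-block correspondence $\beta_{\textsf{Tail}}=A_{-1}$, $\beta_{T_{0}}=A_{0}$, $\beta_{i}=A_{i}$, $\beta_{T_{j}}=X_{j}$, obtained by the greedy maximal-right-divisor argument of Proposition \ref{prop:m-normal} (noting that $\beta_{0}$ is trivial), followed by induction on $n$ using the restricted orderings attached to $\bk_{0}$ and the $\bk_{j}$. You merely make explicit some bookkeeping (the $\ro$-maximality versus greedy extraction step) that the paper leaves implicit by reference to that earlier proof.
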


\begin{proof}
The proof is almost the same as the proof of Proposition \ref{prop:m-normal}, so we only describe the correspondence between these two normal forms. 
For a positive $n$-braid $\beta$, let $W=A_{m}\cdots A_{-1}$ be the $\C$-normal form of $\beta$, and denote the subword decomposition of $A_{-1}$ by $A_{-1}=X_{1}X_{2}\cdots X_{n-2}$.
Similarly, let $\beta=\beta_{\textsf{Main}}\beta_{T_{0}}\beta_{T_{1}}\cdots\beta_{T_{n-2}}$ be a twisted version of the alternate decomposition of $\beta$, and $\beta_{\textsf{Main}}=\beta_{p}\beta_{p-1}\cdots \beta_{0}$ be the alternate decomposition of $\beta_{\textsf{Main}}$. As is easily observed, the braid $\beta_{0}$ is always trivial, so we omit it.
By the same arguments of the proof of Proposition \ref{prop:m-normal}, we obtain a correspondence $\beta_{\textsf{Tail}}=A_{-1}$, $\beta_{T_{0}} = A_{0}$, $\beta_{i}=A_{i}$, and $\beta_{T_{i}}=X_{i}$.
Thus, an inductive argument shows these two normal forms are indeed identical.
\end{proof}

\begin{exam}
The definition of the tail-twisted $\Phi$-normal form is a bit complex, 
so we give a concrete example of the tail-twisted normal form in a simple case.
Let $\bk=\{2,1,3\} \in S_{4}$. The normal finite Thurston type ordering corresponding to the permutation $\bk$ is the ordering given in Example \ref{exam:thurston4}.

Consider a positive $4$-braid $\beta=\sigma_{1}\sigma_{3}\sigma_{2}\sigma_{3}\sigma_{2}^{2}\sigma_{1}^{2}\sigma_{3}$ which appeared in Example \ref{exam:thurston4}. By computation, 
\[ \beta_{\textsf{Tail}}= \beta \wedge \langle \sigma_{1}, \sigma_{3} \rangle = \sigma_{1}^{2}\sigma_{3}^{2}. \]
So $\beta_{T_{1}} = \sigma_{1}^{2}$ and $\beta_{T_{2}}= \sigma_{3}^{2}$ and
\[ \beta_{T_{0}} = \beta\beta_{\textsf{Tail}}^{-1} \wedge \langle \sigma_{2},\sigma_{3} \rangle = \sigma_{3}^{3}\sigma_{2}. \]
Thus,
\[ \beta_{\textsf{Main}} = \beta \beta_{\textsf{Tail}}^{-1} \beta_{T_{0}}^{-1} = \sigma_{1}. \]

Since for this permutation $\bk$, all permutations $\bk_{0}$, $\bk_{1}$ and $\bk_{2}$ are the trivial permutations, so all of the tail-twisted $\Phi$-normal forms with respect to these permutations are merely the ordinary $\Phi$-normal forms.
Thus, we obtain
\[
\left\{
\begin{array}{l}
\widetilde{N}(\beta_{T_{1}};\bk_{1}) = \sigma_{1}^{2}, \; \widetilde{N}(\beta_{T_{2}};\bk_{2}) = \sigma_{3}^{2}\\
\widetilde{N}(\beta_{T_{0}};\bk_{0}) = \sigma_{3}^{3}\sigma_{2} \\
N(\beta_{\textsf{Main}}) = \sigma_{1}.
\end{array}
\right.
\]

Combing the above results, we conclude that the $\bk$-tail twisted normal form of $\beta$ is given by
\[ \widetilde{N}(\beta ; \bk) = \sigma_{1}\sigma_{3}^{3}\sigma_{2}\sigma_{1}^{2}\sigma_{3}^{2}. \]
which is of course identical with the $\C$-normal form of $\beta$ given in Example \ref{exam:thurston4}.
\end{exam}

Finally, using these results and Dehornoy's results about the $\Phi$-normal forms, we prove Theorem \ref{thm:complexity}. 
Theorem \ref{thm:complexity} is interesting in the following sense. Although in \cite{rw} a computational complexity of finite Thurston-type orderings is given, but it relies on Mosher's automatic structure of mapping class groups \cite{mo}. So the relationships between a usual word representative and a computational complexity is indirect. The tail-twisted $\Phi$-normal form formulation allows us to determine the computational complexity in usual word representatives, and provide more convenient and practical method to compute finite Thurston type orderings.
  
\begin{proof}[Proof of Theorem \ref{thm:complexity}]
   For Dehornoy's $\Phi$-normal form, these results are proved in \cite{d2}. First we consider normal case. In this case, the $\C$-normal form is identical with the tail-twisted $\Phi$-normal form. A computation of twisted tail parts, that is, a computation of $\beta_{T_{i}}$ and tail-twisted normal forms $\widetilde{N}(\beta_{T_{i}};\bk_{i})$, is merely a computation of maximal right divisors and the tail-twisted $\Phi$-normal forms for smaller $n$. So the computation of these parts does not affect the order of the total computational complexity. Therefore the theorem also holds for normal Thurston type orderings. 
For a general finite Thurston type ordering, computation of $\C$-normal form for word length $l$ braid is a computation of $\C$-normal form with respect to normal finite type of a braid with the word length $l+p$, where $p$ is the word length of a positive conjugating element. Since $p$ is a constant, therefore the conclusion holds.  
\end{proof}

   The $\C$-normal form constructions and arguments in this paper can be seen as a geometric background of the $\Phi$-normal form and Burckel's normal form. Our $\C$-normal form brings new information about the questions raised by Dehornoy in \cite{d2}. Our construction gives an answer to the question $6.5$ in \cite{d2}, which is in short, ``Give a direct proof of the fact that the $\Phi$-normal form gives a combinatorial description of the Dehornoy ordering."
Here the word ``direct" means without using Burckel's result of the Dehornoy orderings. Our proof requires neither Burckel's normal form nor its tree construction. We use a Geometric definition of the Dehornoy ordering given in \cite{fgrrw}, \cite{sw} and the property $S$, which is confirmed by the geometric definition. Thus our proof is independent of Burckel's results.
Of course, the above question is still open when we restrict to use the usual algebraic definition of the Dehornoy ordering and algebraic method.

\end{document}